\documentclass[12pt]{amsart}
\usepackage{latexsym,amssymb,amsmath,amsthm,amscd,graphicx}
\usepackage{esint} 

\usepackage{color}
\usepackage{url,hyperref}

\numberwithin{equation}{section}
\newtheorem{theorem}{Theorem}[section]
\newtheorem{lemma}[theorem]{Lemma}
\newtheorem{corollary}[theorem]{Corollary}
\newtheorem{proposition}[theorem]{Proposition}

\theoremstyle{definition}

\newtheorem{remark}[theorem]{Remark}
\newtheorem{definition}[theorem]{Definition}
\newtheorem{example}[theorem]{Example}

\theoremstyle{remark}

\title[Generalization for Poincar\'e--Sobolev inequalities with local weights]{Generalization for Poincar\'e--Sobolev inequalities with local weights}
\author{Naoya Hatano}
\date{}

\address{Graduate School of Information Science and Technology, the University of Osaka, 1-5, Yamadaoka, Suita-shi, Osaka 565-0871, Japan}

\email{n.hatano.chuo@gmail.com}

\begin{document}

\maketitle

\begin{abstract}
It is well known the local integral inequality which is called Poincar\'e--Sobolev inequality on each domain cube (or ball).
After that many authors investigated the generalization for this inequality with Muckenhoupt-type weights or global weights which are independent of the domain cube.
In this paper, we investigated similar weighted generalization for this inequality with the local weights which are depending on the domain cube without assuming the Muckenhoupt condition.
Moreover, we considered the weighted generalization for the homogeneous and inhomogeneous-type Sobolev embedding theorems as some applications.
\end{abstract}

{\bf 2020 Classification}
Primary: 42B35,
Secondary: 46E35

{\bf Keywords}
Poincar\'e--Sobolev inequality, Morrey spaces, Sobolev embedding theorem.

\section{Introduction}\label{s:Intro}

For $1\le p,q<\infty$, we assume that
\[
\frac1q
\ge
\frac1p-\frac1n.
\]
Then there exists a constant $C>0$ such that for each cube $Q\subset{\mathbb R}^n$ and Lipchitz function $f$ on $Q$,
\[
\left(
\frac1{|Q|}
\int_Q|f(x)-f_Q|^q\,{\rm d}x
\right)^{\frac1q}
\le C
\ell_Q
\left(
\frac1{|Q|}
\int_Q|\nabla f(x)|\,{\rm d}x
\right)^{\frac1p},
\]
where $\ell_Q$ is the sidelength of $Q$, and $f_Q$ stands for the average $|Q|^{-1}\int_Qf(x)\,{\rm d}x$.
This inequality is called Poincar\'e--Sobolev inequality.
Additionally, the exponent $p^\ast$ which satisfies
\[
\frac1{p^\ast}
=
\frac1p-\frac1n
\]
is called Sobolv conjugate number, and the case $q=p^\ast$ in the above Poincar\'e--Sobolev inequality is sharpest.
As an application of this inequality, it is well known that Harnack's inequality for elliptic partial equations is proved.
When $A(x)$ is a matrix-valued function satisfying the uniform ellipticity condition, for a nonnegative weak solution $u$ to the elliptic partial equation
\[
{\rm div}[A(x)\nabla u(x)]=0
\]
and any cube $Q\subset{\mathbb R}^n$, Harnack's inequality
\[
\sup_Qu
\le C
\inf_Qu
\]
holds.
Moreover, it is known that the Sobolev embedding theorem can be obtained from the Poincar\'e--Sobolev inequality with the simple calculation.
To this end, the purpose of this paper is to give some weighted generalization for the Poincar\'e--Sobolev inequality, and consider the application to some weighted generalization for the Sobolev embedding theorem.

There are some known results for the weighted generalization for the Poincar\'e--Sobolev inequalities.
As some generalization of the form keeping the integration averages, for weights $w,v$, the inequality
\begin{equation}\label{eq:2weight-PS}
\left(
\frac1{v(Q)}
\int_Q|f(x)-f_Q|^qv(x)\,{\rm d}x
\right)^{\frac1q}
\le C
\ell_Q
\left(
\frac1{w(Q)}
\int_Q|\nabla f(x)|w(x)\,{\rm d}x
\right)^{\frac1p}
\end{equation}
is considered by many authors.
At first, the 1-weight cases $w=v$ under the Muckenhoupt $A_p$-condition are given as follows.
Fabes, Kenig and Serapioni \cite{FKS82} gave the case $q=p$ and $w\in A_p({\mathbb R}^n)$.
When the weight $w\in A_r({\mathbb R}^n)$ is given, P\'erez and Rela \cite{PeRe19} introduced some suitable Sobolev conjugate number $p_w^\ast$, and proved the case $q=p_w^\ast$.
Next, the 2-weighted cases are given as follows.
Chanillo and Wheeden \cite{ChWh85} investigated the case $1<p<q<\infty$, $w\in A_p({\mathbb R}^n)$ with some assumption for $w,v$ (see Section \ref{s:comparison}).

In this study, we consider the 2-weighted local norm estimate
\[
\|v(f-f_Q)\|_{L^q(Q)}
\le C_{Q,w,v}
\bigl\|w|\nabla f|\bigr\|_{L^p(Q)}
\]
on each cube $Q\subset{\mathbb R}^n$.
Similar investigation is given by Lorist and Wagenaar \cite{LoWa26-pre}.
They considered the case of assuming the Fujii--Wilson-type local $A_\infty$-weight $v^q$ respect to the cube $Q$, and gave more detailed estimates for $1\le p\le q<\infty$.
Meanwhile, our results are not assuming the $A_\infty$-condition, and include the case $p>q$.
Moreover, we consider the 1-weight case with respect to the case $w=1$, and obtain some results including the critical cases.
Here, P\'erez and Rela \cite{PeRe19} considered the general frameworks which are the cases as replacing the right hand side with the general functional for $Q$.
Although we do not aim the such cases in this paper, we succeeded in refining some results as mentioned in the corollaries in their paper (see Section \ref{s:comparison}).

As a related topic in harmonic analysis, the weighted boundedness of fractional integral operators has been extensively studied.
The 1-weighted $L^p({\mathbb R}^n,w^p)$-$L^q({\mathbb R}^n,w^q)$ boundedness was studied by Muckenhoupt and Wheeden in 1974 \cite{MuWh74}.
Subsequently, the 2-weighted boundedness has been investigated by many authors (see, for example, \cite{CrMo13,Moen09,Perez94,Perez90,SaWh92}).
Our study can be viewed as a local version of these investigations for the fractional integral operator of order 1.

Here, the following is a list of standard notation throughout this paper.
\begin{itemize}
\item If $A\le CB$ for some constant $C>0$, we write $A\lesssim B$ or $B\gtrsim A$.

\item If both $A\lesssim B$ and $A\gtrsim B$ hold, we write $A\approx B$.

\item $L^0({\mathbb R}^n)$ denotes the space of all measurable functions.

\item For $E\subset{\mathbb R}^n$, $L_{\rm loc}^1(E)$ denotes the space of functions integrable on every compact subset of $E$.

\item The symbol ${\mathcal Q}({\mathbb R}^n)$ denotes the collection of all cubes in ${\mathbb R}^n$ whose sides are parallel to the coordinate axes.

\item For $Q\in{\mathcal Q}({\mathbb R}^n)$, $f_Q$ is the integral average of $f$ over $Q$.
Also, the symbol $\fint_Qf(x)\,{\rm d}x$ is defined by the same as $f_Q$.
Namely,
\[
f_Q
=
\fint_Qf(x)\,{\rm d}x
\equiv
\frac1{|Q|}
\int_Qf(x)\,{\rm d}x.
\]

\item The symbol ${\mathcal D}(Q)$ denotes the collection of all cubes obtained by finitely many bisections of $Q$.

\item The symbol $Lip(Q)$ is denotes the family of all Lipschitz continuous functions on $Q\in{\mathcal Q}({\mathbb R}^n)$, and for $E\subset{\mathbb R}^n$, the symbol $Lip_{\rm c}(E)$ denotes the family of all Lipschitz functions with compact support in $E$.

\item For $(j,m)\in{\mathbb Z}\times{\mathbb Z}^n$, the cube $2^{-j}(m+[0,1)^n)$ is called dyadic cube, and these family is denoted by ${\mathcal D}({\mathbb R}^n)$.
\end{itemize}

The remainder of this paper is organized as follows.
In Section \ref{s:weights}, we introduce the families of weights used to state our main results and present some remarks.
In Section \ref{s:Main}, we state our main results.
In Section \ref{s:preliminaries}, we provide some lemmas used to prove the main theorems.
In Section \ref{s:proof}, we prove the main theorems.
In Section \ref{s:comparison}, we compare our results with previous results.
Finally, in Section \ref{s:application}, we give some applications of the main theorems.

\section{Families of weights}\label{s:weights}

In this section, we introduce the families of weights using this study.
Additionally, for $E\subset{\mathbb R}^n$, when
\[
w(x),w^{-1}(x)>0
\quad
\text{a.e. $x\in E$},
\]
$w\in L_{\rm loc}^1(E)$ is called weight on $E$.

At first, as some families of weights using the main results, we define the families of pairs of weights on $Q\in{\mathcal Q}({\mathbb R}^n)$ as follows.

\begin{definition}\label{def:cAQ}
Let $1<p<\infty$, $0<q<\infty$, $1\le r,\tilde{r}<\infty$ and $0<\rho\le\infty$, and let $Q\in{\mathcal Q}({\mathbb R}^n)$.
Define ${\mathcal A}_{p,q,\rho}^{r,\tilde{r}}(Q)$ by the family of all pairs $(w,v)$ of weights on $Q$ with the finite seminorm
\[
[w,v]_{{\mathcal A}_{p,q,\rho}^{r,\tilde{r}}(Q)}
\equiv
\left(
\sum_{R\in{\mathcal D}(Q)}
\left[
\frac{\ell_R}{|R|^{\frac1p-\frac1q}}
\left(\fint_Rw(x)^{-p'r}\,{\rm d}x\right)^{\frac1{p'r}}
\left(
\fint_Rv(x)^{q\tilde{r}}\,{\rm d}x
\right)^{\frac1{q\tilde{r}}}
\right]^\rho
\right)^{\frac1\rho}
\]
with usual modifications made when $\rho=\infty$, and abbreviate
$
{\mathcal A}_{p,q}^{r,\tilde{r}}(Q)
\equiv
{\mathcal A}_{p,q,\infty}^{r,\tilde{r}}(Q)
$.
\end{definition}

Next, we will use the 1-weight classes with respect to the special case $(w,v)=(1,|g|)$ the previous classes.
This case corresponds to an analogue of the Fefferman--Phong inequality
\[
\int_{{\mathbb R}^n}|f(x)|^2V(x)\,{\rm d}x
\lesssim
\|V\|_{{\mathcal M}^{n/2}_q}
\int_{{\mathbb R}^n}|\nabla f(x)|^2\,{\rm d}x,
\]
where ${\mathcal M}^{n/2}_q({\mathbb R}^n)$ is the original Morrey space.
In this study, we use the Morrey spaces restricted to $Q\in{\mathcal Q}({\mathbb R}^n)$ (see \cite{Fefferman83}).

\begin{definition}\label{def:Q-Morrey}
Let $Q\in{\mathcal Q}({\mathbb R}^n)$, and let $0<p,q<\infty$ and $0<r\le\infty$.
The Morrey space ${\mathcal M}^p_{q,r}(Q)$ is defined by family of all $f\in L_{\rm loc}^1(Q)$ with the finite quasi-norm
\[
\|f\|_{{\mathcal M}^p_{q,r}(Q)}
\equiv
\left(
\sum_{R\in{\mathcal D}(Q)}
\left[
|R|^{\frac1p-\frac1q}
\|f\|_{L^q(R)}
\right]^r
\right)^{\frac1r}
\]
with usual modifications made when $r=\infty$, and abbreviate
$
{\mathcal M}^p_q(Q)
\equiv
{\mathcal M}^p_{q,\infty}(Q)
$.
\end{definition}

In this applications, we will use the familty of weights without restriction to $Q\in{\mathcal Q}({\mathbb R}^n)$, and in the definition of (semi)norms of weights, we will introduce the only case defined by the framework of ${\mathcal D}({\mathbb R}^n)$.

\begin{definition}\label{def:cA}
Let $1<p<\infty$, $0<q<\infty$, $1\le r,\tilde{r}<\infty$ and $0<\rho\le\infty$.
Define ${\mathcal A}_{p,q,\rho}^{r,\tilde{r}}({\mathbb R}^n)$ and ${\mathcal A}_{p,q,\rho}^{r,\tilde{r},{\rm loc}}({\mathbb R}^n)$ by the families of all pairs $(w,v)$ of weights on ${\mathbb R}^n$ with the finite seminorms
\[
[w,v]_{{\mathcal A}_{p,q,\rho}^{r,\tilde{r}}}
\equiv
\left(
\sum_{Q\in{\mathcal D}}
\left[
\frac{\ell_Q}{|Q|^{\frac1p-\frac1q}}
\left(\fint_Qw(x)^{-p'r}\,{\rm d}x\right)^{\frac1{p'r}}
\left(
\fint_Qv(x)^{q\tilde{r}}\,{\rm d}x
\right)^{\frac1{q\tilde{r}}}
\right]^\rho
\right)^{\frac1\rho}
\]
and
\[
[w,v]_{{\mathcal A}_{p,q,\rho}^{r,\tilde{r},{\rm loc}}}
\equiv
\left(
\sum_{Q\in{\mathcal D}, \; |Q|\le1}
\left[
\frac{\ell_Q}{|Q|^{\frac1p-\frac1q}}
\left(\fint_Qw(x)^{-p'r}\,{\rm d}x\right)^{\frac1{p'r}}
\left(
\fint_Qv(x)^{q\tilde{r}}\,{\rm d}x
\right)^{\frac1{q\tilde{r}}}
\right]^\rho
\right)^{\frac1\rho}
\]
with usual modifications made when $\rho=\infty$, respectively, and abbreviate
$
{\mathcal A}_{p,q}^{r,\tilde{r}}({\mathbb R}^n)
\equiv
{\mathcal A}_{p,q,\infty}^{r,\tilde{r}}({\mathbb R}^n)
$
and
$
{\mathcal A}_{p,q}^{r,\tilde{r},{\rm loc}}({\mathbb R}^n)
\equiv
{\mathcal A}_{p,q,\infty}^{r,\tilde{r},{\rm loc}}({\mathbb R}^n)
$.
\end{definition}

\begin{definition}
Let $1<p<\infty$, $0<q<\infty$, $1\le r,\tilde{r}<\infty$ and $0<\rho\le\infty$.
Define ${\mathcal A}_{p,q,\rho}^{r,\tilde{r}}({\mathbb R}_+^n)$ and ${\mathcal A}_{p,q,\rho}^{r,\tilde{r},{\rm loc}}({\mathbb R}_+^n)$ by the families of all pairs $(w,v)$ of weights on ${\mathbb R}_+^n$ with the finite seminorms
\[
[w,v]_{{\mathcal A}^{r,\tilde{r}}_{p,q,\rho}({\mathbb R}_+^n)}
\equiv
\left(
\sum_{Q\in{\mathcal D}_+}
\left[
\frac{\ell_Q}{|Q|^{\frac1p-\frac1q}}
\left(\fint_Qw(x)^{-p'r}\,{\rm d}x\right)^{\frac1{p'r}}
\left(
\fint_Qv^{q\tilde{r}}(x)\,{\rm d}x
\right)^{\frac1{q\tilde{r}}}
\right]^\rho
\right)^{\frac1\rho}
\]
and
\[
[w,v]_{{\mathcal A}^{r,\tilde{r},{\rm loc}}_{p,q,\rho}({\mathbb R}_+^n)}
\equiv
\left(
\sum_{Q\in{\mathcal D}_+, \; |Q|\le1}
\left[
\frac{\ell_Q}{|Q|^{\frac1p-\frac1q}}
\left(\fint_Qw(x)^{-p'r}\,{\rm d}x\right)^{\frac1{p'r}}
\left(
\fint_Qv^{q\tilde{r}}(x)\,{\rm d}x
\right)^{\frac1{q\tilde{r}}}
\right]^\rho
\right)^{\frac1\rho}
\]
with usual modifications made when $\rho=\infty$, respectively, where ${\mathcal D}_+({\mathbb R}^n)\equiv\{Q\in{\mathcal D}({\mathbb R}^n)\,:\,Q\subset\overline{{\mathbb R}_+^n}\}$.
\end{definition}

\begin{definition}\label{def:Morrey}
Let $0<p,q<\infty$ and $0<r\le\infty$.
The Morrey spaces ${\mathcal M}^p_{q,r}({\mathbb R}^n)$ and $m^p_{q,r}({\mathbb R}^n)$ are defined by families of all $f\in L_{\rm loc}^1({\mathbb R}^n)$ with the finite quasi-norms
\[
\|f\|_{{\mathcal M}^p_{q,r}}
\equiv
\left(
\sum_{Q\in{\mathcal D}}
\left[
|Q|^{\frac1p-\frac1q}
\|f\|_{L^q(Q)}
\right]^r
\right)^{\frac1r}
\]
and
\[
\|f\|_{m^p_{q,r}}
\equiv
\left(
\sum_{Q\in{\mathcal D}, \; |Q|\le1}
\left[
|Q|^{\frac1p-\frac1q}
\|f\|_{L^q(Q)}
\right]^r
\right)^{\frac1r}
\]
with usual modifications made when $r=\infty$, respectively, and abbreviate
$
{\mathcal M}^p_q({\mathbb R}^n)
\equiv
{\mathcal M}^p_{q,\infty}({\mathbb R}^n)
$
and
$m^p_q({\mathbb R}^n)
\equiv
m^p_{q,\infty}({\mathbb R}^n)
$.
\end{definition}

The family ${\mathcal A}_{p,q}^{r,\tilde{r}}({\mathbb R}^n)$ has been already considered in \cite{Moen09}, and, subsequently, its local-type of this ${\mathcal A}_{p,q}^{r,\tilde{r}}(Q)$ is considered in \cite{LoWa26-pre}.
As far as we know, the family ${\mathcal A}_{p,q}^{r,\tilde{r},{\rm loc}}({\mathbb R}^n)$ has not been used in the literature.
However, our use of this family is based on the approach by Rychkov \cite{Rychkov01}, in which the local Muckenhoupt class is introduced (see Definition \ref{def:Ap} below).
Here, the ${\mathcal A}_{p,q}^{r,\tilde{r}}$-families involve power weights, and hence we obtain the Hardy-type weighted Poincar\'e--Sobolev inequalities.

\begin{example}
According to \cite[Example 113]{SDH20}, when $\alpha>-n$, for any $Q\in{\mathcal Q}({\mathbb R}^n)$,
\[
\int_Q|x|^\alpha\,{\rm d}x
\approx
\max(\ell_Q,|c_Q|)^\alpha
|Q|,
\]
where $c_Q$ is a center of $Q$.
Then we have the following examples of power weights.
\begin{itemize}
\item[{\rm (1)}] For each $Q\in{\mathcal D}({\mathbb R}^n)$, $(|x|^\alpha,|x|^\beta)\in{\mathcal A}_{p,q,\rho}^{r,\tilde{r}}(Q)$ if
\[
\alpha
<
\frac n{p'r},
\quad
\beta
>
-\frac n{q\tilde{r}},
\quad
\begin{cases}
\displaystyle
\frac1q
-
\left(\frac1p-\frac1n\right)
>
\frac{\max(\alpha-\beta,0)}n+\frac1\rho,
& \rho<\infty, \vspace{5pt} \\
\displaystyle
\frac1q
-
\left(\frac1p-\frac1n\right)
\ge
\frac{\max(\alpha-\beta,0)}n,
& \rho=\infty.
\end{cases}
\]

\item[{\rm (2)}] $(|x|^\alpha,|x|^\beta)\in{\mathcal A}_{p,q}^{r,\tilde{r}}({\mathbb R}^n),{\mathcal A}_{p,q}^{r,\tilde{r}}({\mathbb R}_+^n)$ if
\[
-\frac n{q\tilde{r}}
<
\beta
\le
\alpha
<
\frac n{p'r},
\quad
\frac1q
-
\left(\frac1p-\frac1n\right)
=
\frac{\alpha-\beta}n
\]
However, when $\rho<\infty$, $(|x|^\alpha,|x|^\beta)\notin{\mathcal A}_{p,q,\rho}^{r,\tilde{r}}({\mathbb R}^n),{\mathcal A}_{p,q,\rho}^{r,\tilde{r}}({\mathbb R}_+^n)$.

\item[{\rm (3)}] $(|x|^\alpha,|x|^\beta)\in{\mathcal A}_{p,q,\rho}^{r,\tilde{r},{\rm loc}}({\mathbb R}^n),{\mathcal A}_{p,q,\rho}^{r,\tilde{r},{\rm loc}}({\mathbb R}_+^n)$ if
\[
-\frac n{q\tilde{r}}
<
\beta
\le
\alpha
<
\frac n{p'r},
\quad
\begin{cases}
\displaystyle
\frac1q
-
\left(\frac1p-\frac1n\right)
>
\frac{\alpha-\beta}n+\frac1\rho,
& \rho<\infty, \vspace{5pt} \\
\displaystyle
\frac1q
-
\left(\frac1p-\frac1n\right)
\ge
\frac{\alpha-\beta}n,
& \rho=\infty.
\end{cases}
\]

\item[{\rm (4)}] Assume that $\rho<\infty$.
Then
\[
(
\max(|x|^{\alpha_1},|x|^{\alpha_2}),
\min(|x|^{\beta_1},|x|^{\beta_2})
)
\in
{\mathcal A}_{p,q,\rho}^{r,\tilde{r}}({\mathbb R}^n),
{\mathcal A}_{p,q,\rho}^{r,\tilde{r}}({\mathbb R}_+^n)
\]
if
\[
-\frac n{q\tilde{r}}
<
\beta_1\le\beta_2
\le
\alpha_2\le\alpha_1
<
\frac n{p'r},
\quad
\frac{\alpha_1-\beta_1}n
<
\frac1q
-
\left(\frac1p-\frac1n\right)
<
\frac{\alpha_2-\beta_2}n.
\]
\end{itemize}
\end{example}

The space $m^p_q({\mathbb R}^n)$ is called the small Morrey space or the local Morrey space.
The space ${\mathcal M}^p_{q,r}({\mathbb R}^n)$ is called the Bourgain--Morrey space.
This space was first considered by Bourgain \cite{Bourgain91} in the study of the Fourier transform of certain finite measures on the sphere.
The formulation used here was later introduced by Moyua, Vargas and Vega in \cite{MVV99}.
Since then, Bourgain--Morrey spaces have been studied by many authors (see, for example, \cite{Diarra24,DiNa24,HNSH23,HLY23,ZSTYY23}).

\begin{remark}
When $r<\infty$, necessary and sufficient conditions of
\[
{\mathcal M}^p_{q,r}(Q)
\ne
\{0\},
\quad
{\mathcal M}^p_{q,r}({\mathbb R}^n)
\ne
\{0\}
\quad \text{and} \quad
m^p_{q,r}({\mathbb R}^n)
\ne
\{0\}
\]
are
\[
p<r,
\quad
q<p<r
\quad \text{and} \quad
p<r,
\]
respectively.
Thus, here and below, one always assume these conditions for all results using these Bourgain--Morrey spaces.
\end{remark}

Additionally, to compare the known weighted Poincar\'e--Sobolev inequalities and establish the weighted Gagliardo--Nirenberg interpolation inequalities, we introduce the Muckenhoupt $A_p$-classes.

\begin{definition}\label{def:Ap}
Let $1\le p<\infty$ and $Q\in{\mathcal D}(Q)$.
Define $A_p(Q)$, $A_p({\mathbb R}^n)$ and $A_p^{\rm loc}({\mathbb R}^n)$ by the family of all weights $w$ with the finite seminorms
\[
[w]_{A_p(Q)}
\equiv
\begin{cases}
\displaystyle
\sup_{R\in{\mathcal D}(Q)}
\fint_Rw(x)\,{\rm d}x
\;
\|w^{-1}\|_{L^\infty(R)},
& p=1, \\
\displaystyle
\sup_{R\in{\mathcal D}(Q)}
\fint_Rw(x)\,{\rm d}x
\left(
\fint_Rw(x)^{-\frac1{p-1}}\,{\rm d}x
\right)^{p-1},
& p>1,
\end{cases}
\]
\[
[w]_{A_p}
\equiv
\sup_{Q\in{\mathcal Q}}
[w]_{A_p(Q)}
\quad \text{and} \quad
[w]_{A_p^{\rm loc}}
\equiv
\sup_{Q\in{\mathcal Q}, \; |Q|\le1}
[w]_{A_p(Q)},
\]
respectively.
Additionally, set
\[
A_\infty(Q)
\equiv
\bigcup_{p=1}^\infty
A_p(Q),
\quad
A_\infty({\mathbb R}^n)
\equiv
\bigcup_{p=1}^\infty
A_p({\mathbb R}^n),
\quad
A^{\rm loc}_\infty({\mathbb R}^n)
\equiv
\bigcup_{p=1}^\infty
A^{\rm loc}_p({\mathbb R}^n).
\]
Additionally, we introduce as the family of Muckenhoupt weights $A_p({\mathbb R}_+^n)$ by having the $A_p$-norm
\[
[w]_{A_p({\mathbb R}_+^n)}
\equiv
\sup_{Q\in{\mathcal Q}, \; Q\subset{\mathbb R}_+^n}
[w]_{A_p(Q)}.
\]
\end{definition}

The class $A_p({\mathbb R}^n)$ is introduced by Muckenhoupt \cite{Muckenhoupt72}, and the class $A_p^{\rm loc}({\mathbb R}^n)$ is introduced by Rychkov \cite{Rychkov01}.

\section{Main theorems}\label{s:Main}

In this section, we present the main theorems obtained in this paper.

\begin{theorem}\label{main:2weight-1}
Let $1<p,q,r<\infty$.
If $p\le q$, then, for all $Q\in{\mathcal Q}({\mathbb R}^n)$, $(w,v)\in{\mathcal A}^{r,r}_{p,q}(Q)$ and $f\in Lip(Q)$,
\[
\|v(f-f_Q)\|_{L^q(Q)}
\lesssim
[w,v]_{{\mathcal A}^{r,r}_{p,q}(Q)}
\bigl\|w|\nabla f|\bigr\|_{L^p(Q)}.
\]
\end{theorem}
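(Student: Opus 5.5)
We first reduce the statement to a two-weight inequality for a dyadic fractional integral operator, and then treat that operator with a Pérez-type bump argument.

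\emph{Step 1: pointwise reduction.} For a Lipschitz $f$ on $Q$ we use the dyadic pointwise estimate
\[
|f(x)-f_Q|
\lesssim
\sum_{R\in{\mathcal D}(Q),\,x\in R}\ell_R\fint_R|\nabla f(y)|\,{\rm d}y
\equiv I^{\mathcal D(Q)}(|\nabla f|)(x),
\qquad \text{a.e. } x\in Q .
\]
This follows from a telescoping argument. Write $f(x)-f_Q=\sum_k (f_{R_{k+1}}-f_{R_k})$ along the chain of dyadic cubes $R_k\ni x$ with $R_0=Q$, using Lebesgue differentiation. Each term is bounded by $|f_{R_{k+1}}-f_{R_k}|\lesssim\fint_{R_k}|f-f_{R_k}|\lesssim \ell_{R_k}\fint_{R_k}|\nabla f|$, where the last bound is the classical $(1,1)$ Poincaré inequality on a cube.

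The theorem then reduces to the two-weight bound
\[
\|v\,I^{\mathcal D(Q)}g\|_{L^q(Q)}\lesssim [w,v]_{{\mathcal A}^{r,r}_{p,q}(Q)}\|wg\|_{L^p(Q)},\qquad g\ge0 .
\]

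\emph{Step 2: duality and sparse domination.} We dualize against $h\ge0$ with $\|h\|_{L^{q'}(Q)}=1$, which turns the target into
\[
\sum_{R}\ell_R|R|\,\Big(\fint_R g\Big)\Big(\fint_R hv\Big).
\]
Next we pass to a sparse subfamily. Since $\ell_R$ and the averages combine into the positive dyadic form, the standard principal-cube / stopping-time argument (e.g.\ stopping on $\fint_R g$) lets us dominate the sum by one over a sparse family ${\mathcal S}\subset{\mathcal D}(Q)$. The factor $\ell_R$ is harmless because it decreases geometrically along chains of nested cubes.

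On each $R$ we apply Hölder's inequality:
\[
\fint_R g\le \Big(\fint_R w^{-p'r}\Big)^{\frac1{p'r}}\Big(\fint_R (gw)^{(p'r)'}\Big)^{\frac1{(p'r)'}},
\qquad
\fint_R hv\le\Big(\fint_R v^{qr}\Big)^{\frac1{qr}}\Big(\fint_R h^{(qr)'}\Big)^{\frac1{(qr)'}} .
\]
The bump factors, together with $\ell_R|R|^{\frac1q-\frac1p}$, are exactly controlled by the constant $[w,v]$. The sum is then bounded by
\[
[w,v]\sum_{R\in\mathcal S}|R|^{\frac1p+\frac1{q'}}\,\langle gw\rangle_{(p'r)',R}\,\langle h\rangle_{(qr)',R}.
\]

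\emph{Step 3: the main obstacle.} The hardest step is summing this sparse form when $p\le q$ and $\frac1p+\frac1{q'}\ge1$. Since $(p'r)'<p$ and $(qr)'<q'$, we have $\langle gw\rangle_{(p'r)',R}\le \inf_{R} M_{(p'r)'}(gw)$ and similarly for $h$. We use $|R|\approx|E_R|$, and the fact that $|R|^{\frac1p+\frac1{q'}-1}$ is monotone along chains. Then we split the sum as a product of $\ell^p$ and $\ell^{q'}$ sums:
\[
\Big(\sum_R|E_R|\,\inf_R M_{(p'r)'}(gw)^p\Big)^{\frac1p}\Big(\sum_R|E_R|\,\inf_R M_{(qr)'}h^{q'}\Big)^{\frac1{q'}} .
\]
This uses $\ell^p\subset\ell^{q}$ with $p\le q$: write $|R|^{1/p+1/q'}=|R|^{1/p}|R|^{1/q'}$ and apply Hölder with exponents $q$ and $q'$, followed by the embedding $\ell^p\hookrightarrow\ell^q$. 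Disjointness of the sets $E_R$ bounds each factor by $\|M_{(p'r)'}(gw)\|_{L^p}$ and $\|M_{(qr)'}h\|_{L^{q'}}$, respectively. These are $\lesssim\|gw\|_{L^p}$ and $\lesssim 1$ by the boundedness of the maximal operator, since $(p'r)'<p$ and $(qr)'<q'$ because $r>1$.

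This completes the plan. The delicate points are the sparse reduction with the $\ell_R$ factor in Step 2, and the $\ell^p\subset\ell^q$ step in Step 3, which is exactly where the hypothesis $p\le q$ enters.
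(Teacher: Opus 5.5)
Your proof is correct. From the sparse bilinear form onward it is the paper's argument:
\begin{itemize}
\item the same H\"older step with the bumps $w^{-p'r}$ and $v^{qr}$;
\item the same passage from $|R|^{1/p}$ times a bumped average to a maximal-function norm on $E_R$, using sparseness;
\item the same H\"older inequality in $(q,q')$ followed by $\ell^p\hookrightarrow\ell^q$, which is exactly where $p\le q$ enters (the paper calls this step Minkowski's inequality).
\end{itemize}

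You differ in how the sparse form is produced. The paper quotes the Lerner--Ombrosi--Rivera-R\'ios oscillation domination (Lemma \ref{lem:osc-sparse}), $|f-f_Q|\lesssim\sum_{R\in\mathcal S}\chi_R\fint_R|f-f_R|$, with $\mathcal S$ depending on $f$. It then applies the $(1,1)$ Poincar\'e inequality on each $R\in\mathcal S$. You instead telescope along dyadic chains to get the full dyadic fractional integral bound $|f-f_Q|\lesssim I^{\mathcal D(Q)}(|\nabla f|)$, and then sparsify by principal cubes for $g=|\nabla f|$.

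That step is valid, but you should write out the two facts behind ``the factor $\ell_R$ is harmless.'' Use stopping threshold $2$, and let $\pi(R)$ be the smallest principal cube containing $R$. Then $\langle g\rangle_R\le 2\langle g\rangle_{\pi(R)}$ and $\sum_{R:\,\pi(R)=P,\,x\in R}\ell_R\le 2\ell_P$. Together these give $\sum_R\ell_R\langle g\rangle_R\int_R hv\le 4\sum_{P\in\mathcal S}\ell_P\langle g\rangle_P\int_P hv$.

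What each route buys:
\begin{itemize}
\item Your route is more elementary and self-contained. It actually proves a local two-weight bound for the positive operator $I^{\mathcal D(Q)}$ acting on arbitrary $g\ge0$, in line with the fractional-integral literature the paper cites.
\item The paper's route is shorter given the black-box lemma. Its pointwise sparse bound for ${\rm osc}_{\mathcal S}f$ is reused for Theorem \ref{main:2weight-2} (2) and in the Hedberg argument for Theorem \ref{main:1weight-3}. Your principal-cube domination can also be run pointwise, so it could serve there as well.
\end{itemize}

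The remark in Step 3 about monotonicity of $|R|^{1/p+1/q'-1}$ along chains plays no role and can be dropped.
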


\begin{theorem}\label{main:2weight-2}
Let $1<p,r<\infty$ and $0<q,\rho<\infty$.
If
\[
p>q,
\quad
\frac1q=\frac1p+\frac1\rho,
\]
then, for all $Q\in{\mathcal Q}({\mathbb R}^n)$ and $f\in Lip(Q)$, the following assertions hold{\rm :}
\begin{itemize}
\item[{\rm (1)}] When $q>1$, for all $(w,v)\in{\mathcal A}^{r,r}_{p,q,\rho}(Q)$,
\[
\|v(f-f_Q)\|_{L^q(Q)}
\lesssim
[w,v]_{{\mathcal A}^{r,r}_{p,q,\rho}(Q)}
\bigl\|w|\nabla f|\bigr\|_{L^p(Q)}.
\]

\item[{\rm (2)}] When $q\le1$, for all $(w,v)\in{\mathcal A}^{r,1}_{p,q,\rho}(Q)$,
\[
\|v(f-f_Q)\|_{L^q(Q)}
\lesssim
[w,v]_{{\mathcal A}^{r,1}_{p,q,\rho}(Q)}
\bigl\|w|\nabla f|\bigr\|_{L^p(Q)}.
\]
\end{itemize}
\end{theorem}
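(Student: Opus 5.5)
We will prove Theorem \ref{main:2weight-2} by reducing it to a dyadic fractional integral of order one and then using a discrete H\"older inequality in place of the supremum used for Theorem \ref{main:2weight-1}.

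\emph{Step 1: pointwise dyadic representation.} For $f\in Lip(Q)$ and a.e. $x\in Q$ we will use the standard chain-of-cubes (telescoping) argument over the dyadic cubes of ${\mathcal D}(Q)$ containing $x$:
\[
|f(x)-f_Q|\lesssim\sum_{R\in{\mathcal D}(Q)}\ell_R\Bigl(\fint_R|\nabla f|\Bigr)\chi_R(x).
\]
This is the usual local substitute for $|f-f_Q|\lesssim I_1(|\nabla f|\chi_Q)$. Writing $g=|\nabla f|$, the bound becomes
\[
\|v(f-f_Q)\|_{L^q(Q)}\lesssim\Bigl\|\sum_R\ell_R\, g_R\,\chi_R v\Bigr\|_{L^q(Q)} .
\]

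\emph{Step 2: the estimate for each cube.} In each $R$ we apply H\"older with $w$:
\[
g_R\le\Bigl(\fint_R (gw)^{p/r'}\Bigr)^{r'/p}\cdot\text{(a $w^{-1}$ factor)}.
\]
More precisely, we use exponents giving $g_R\le \bigl(\fint_R w^{-p'r}\bigr)^{1/(p'r)}\bigl(\fint_R (gw)^{s}\bigr)^{1/s}$, where $1/s=1-1/(p'r)<1/p$, so $s<p$. Then $M_s(gw)=(M(|gw|^s))^{1/s}$ is bounded on $L^p$, because $p/s>1$; this is where $r>1$ is used.

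\emph{Step 3, case (1), $q>1$.} We dualize against $h\ge0$ with $\|h\|_{L^{q'}(Q)}=1$:
\[
\sum_R \ell_R\, g_R\int_R vh\le\sum_R \ell_R\, g_R\,|R|\Bigl(\fint_R v^{qr}\Bigr)^{1/(qr)}\Bigl(\fint_R h^{(qr)'}\Bigr)^{1/(qr)'} .
\]
By Step 2 each term is at most
\[
a_R\,|R|^{\frac1p-\frac1q}\,|R|\,\inf_R M_s(gw)\,\inf_R M_{(qr)'}h ,
\]
where $a_R$ is the bracket appearing in $[w,v]_{{\mathcal A}^{r,r}_{p,q,\rho}(Q)}$. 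This is the main obstacle: the dyadic sum must be controlled without the $A_\infty$/sparse structure. The plan is to use H\"older in $\ell^\rho$--$\ell^{\rho'}$ over $R$, which is possible because $1/q=1/p+1/\rho$, and to bound
\[
\Bigl(\sum_R\bigl[|R|^{1+\frac1p-\frac1q}\inf_R M_s(gw)\inf_R M_{(qr)'}h\bigr]^{\rho'}\Bigr)^{1/\rho'} .
\]
Since $1+\tfrac1p-\tfrac1q=\tfrac1p+\tfrac1{q'}$, the quantity $|R|^{1/p}\inf_R F\le\|F\chi_R\|_{L^p}$, and similarly for $h$ with $q'$. Because $\rho'$-summation dominates an $\ell^1$-type sum in which $\tfrac1{\rho'}=1-\tfrac1p-\tfrac1{q'}+\dots$, we then need an embedding of the form
\[
\sum_R\bigl(\|F\chi_R\|_{p}\,\|H\chi_R\|_{q'}\bigr)^{\rho'}\lesssim\bigl(\|F\|_p\|H\|_{q'}\bigr)^{\rho'} .
\]
I expect to obtain this by a layer (stopping-time on maximal function level sets) argument: for each $R$ take $\lambda$ with $R\subset\{M_sF>\lambda\}$ maximal, and count the cubes per level. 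Finally, the $L^p$ and $L^{q'}$ boundedness of $M_s$ and $M_{(qr)'}$ closes the estimate.

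\emph{Step 4, case (2), $q\le1$.} Duality is unavailable, so we use the $q$-triangle inequality instead:
\[
\Bigl\|\sum_R c_R\chi_R v\Bigr\|_q^q\le\sum_R c_R^q\int_R v^q\le\sum_R c_R^q\,|R|\Bigl(\fint_R v\Bigr)^q ,
\]
which explains why $\tilde r=1$ in this case. Then we apply H\"older in $R$ with exponents $\rho/q$ and $p/q$, since $q/\rho+q/p=1$. This leaves
\[
\sum_R\bigl(|R|^{1/p}\inf_R M_s(gw)\bigr)^p\lesssim\|M_s(gw)\|_p^p ,
\]
with the dyadic overcounting handled as in Step 3 via level sets. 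We conclude by the boundedness of $M_s$.
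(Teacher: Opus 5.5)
Your Steps 3 and 4 both rest on a summation over \emph{all} cubes of ${\mathcal D}(Q)$, namely
\[
\sum_{R\in{\mathcal D}(Q)}\bigl(\|F\chi_R\|_{L^p}\|H\chi_R\|_{L^{q'}}\bigr)^{\rho'}\lesssim\bigl(\|F\|_{L^p(Q)}\|H\|_{L^{q'}(Q)}\bigr)^{\rho'}
\quad\text{and}\quad
\sum_{R\in{\mathcal D}(Q)}\Bigl(|R|^{\frac1p}\inf_RF\Bigr)^p\lesssim\|F\|_{L^p(Q)}^p .
\]
Both inequalities are false. Take $F\equiv H\equiv1$ on $Q$; up to constants this is what you get from $w\equiv1$, $f(x)=x_1$ and $h$ constant. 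Since $\frac1p+\frac1{q'}=\frac1{\rho'}$, every $R$ contributes exactly $|R|$ to either sum. Each generation $\{R\in{\mathcal D}(Q):\ell_R=2^{-k}\ell_Q\}$ therefore contributes $|Q|$, so the left-hand sides are $+\infty$, while the right-hand sides equal $|Q|$.

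Consequently no stopping-time or level-set counting can close the argument. Once you apply H\"older in $\ell^\rho$, all the decay across generations is placed on the coefficients $a_R$. The complementary factor must then be summable on its own, and that is possible only over a sparse (Carleson) family.

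The missing ingredient is exactly this sparseness. Contrary to your remark, it needs no $A_\infty$ hypothesis on $v$, because it is a statement about $f$ alone.

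\textbf{The paper's route.} The paper replaces your Step 1 by Lemma~\ref{lem:osc-sparse}:
\[
|f-f_Q|\lesssim\sum_{R\in{\mathcal S}}\chi_R\fint_R|f-f_R|,
\]
with ${\mathcal S}\subset{\mathcal D}(Q)$ sparse, followed by $\fint_R|f-f_R|\lesssim\ell_R\fint_R|\nabla f|$ from Lemma~\ref{lem:PWO}. Then $|R|\le2|E_R|$ gives
\[
|R|^{\frac1p}\inf_RF\le2^{\frac1p}\|F\|_{L^p(E_R)},\qquad |R|^{\frac1{q'}}\inf_RH\le2^{\frac1{q'}}\|H\|_{L^{q'}(E_R)}.
\]
In case (1) each term of your sum is therefore $\lesssim a_R\|F\|_{L^p(E_R)}\|H\|_{L^{q'}(E_R)}$. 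Three-fold H\"older with $\frac1\rho+\frac1p+\frac1{q'}=1$, together with the disjointness of the $E_R$, finishes the proof. Case (2) works the same way with exponents $\rho/q$ and $p/q$.

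\textbf{Keeping your Step 1.} If you want to keep the full dyadic bound, first pass to the principal cubes ${\mathcal P}$ of $g=|\nabla f|$, and let $\pi(R)$ be the smallest principal cube containing $R$. Then $g_R\le2g_{\pi(R)}$, and $\ell_R$ decays geometrically along the cubes containing $x$ that share the same $\pi(R)$. Hence
\[
\sum_{R\in{\mathcal D}(Q)}\ell_Rg_R\chi_R\le4\sum_{P\in{\mathcal P}}\ell_Pg_P\chi_P ,
\]
with ${\mathcal P}$ sparse, and your Steps 2--4 go through on ${\mathcal P}$.

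\textbf{Smaller points.}
In Step 4, drop the Jensen step $\int_Rv^q\le|R|(\fint_Rv)^q$. It replaces $(\fint_Rv^q)^{1/q}$, which is what $\tilde r=1$ means in ${\mathcal A}^{r,1}_{p,q,\rho}(Q)$, by the larger $\fint_Rv$. You would then prove only a weaker estimate; keeping $\int_Rv^q=|R|\fint_Rv^q$ matches the stated seminorm exactly.
In Step 2, $\frac1s=1-\frac1{p'r}>\frac1p$, not $<$. Your conclusion $s<p$ is nevertheless correct.
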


Moreover, as corollaries, we obtain the following results.
These results can be viewed as the Fefferman--Phong-type weighted inequalities.

\begin{theorem}\label{main:1weight-1}
Let $1<p<\infty$, $1<q<r<\infty$ and $0<s<\infty$.
If
\[
p\le q,
\quad
\frac1q=\frac1p+\frac1s-\frac1n,
\]
then, for all $Q\in{\mathcal Q}({\mathbb R}^n)$, $f\in Lip(Q)$ and $g\in{\mathcal M}^s_r(Q)$,
\[
\|g(f-f_Q)\|_{L^q(Q)}
\lesssim
\|g\|_{{\mathcal M}^s_r(Q)}
\|\nabla f\|_{L^p(Q)}.
\]
\end{theorem}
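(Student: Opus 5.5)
Theorem \ref{main:1weight-1} is a corollary of Theorem \ref{main:2weight-1} with the pair $(w,v)=(1,|g|)$. With a suitable $\tilde r$ we would have
\[
\|g(f-f_Q)\|_{L^q(Q)}\lesssim[1,|g|]_{{\mathcal A}^{\tilde r,\tilde r}_{p,q}(Q)}\|\nabla f\|_{L^p(Q)},
\]
so the task reduces to bounding $[1,|g|]_{{\mathcal A}^{\tilde r,\tilde r}_{p,q}(Q)}$ by $\|g\|_{{\mathcal M}^s_r(Q)}$. Since $w\equiv1$, the factor $\left(\fint_R w^{-p'\tilde r}\right)^{1/(p'\tilde r)}$ equals $1$ for every $\tilde r$. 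Hence
\[
[1,|g|]_{{\mathcal A}^{\tilde r,\tilde r}_{p,q}(Q)}
=\sup_{R\in{\mathcal D}(Q)}\frac{\ell_R}{|R|^{\frac1p-\frac1q}}\left(\fint_R|g|^{q\tilde r}\right)^{\frac1{q\tilde r}}.
\]

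We choose $\tilde r=r/q$, which satisfies $1<\tilde r<\infty$ because $q<r$. This choice is admissible in Theorem \ref{main:2weight-1}, which requires $1<p,q,\tilde r<\infty$ and $p\le q$, all of which hold by hypothesis. With it, $q\tilde r=r$ and
\[
\frac{\ell_R}{|R|^{\frac1p-\frac1q}}\left(\fint_R|g|^{r}\right)^{\frac1r}
=|R|^{\frac1n-\frac1p+\frac1q-\frac1r}\|g\|_{L^r(R)}.
\]
The relation $\frac1q=\frac1p+\frac1s-\frac1n$ gives $\frac1n-\frac1p+\frac1q=\frac1s$. The exponent is therefore $\frac1s-\frac1r$, and the quantity above is exactly the term $|R|^{\frac1s-\frac1r}\|g\|_{L^r(R)}$ in the definition of ${\mathcal M}^s_r(Q)$. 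Taking the supremum over $R\in{\mathcal D}(Q)$ yields $[1,|g|]_{{\mathcal A}^{r/q,r/q}_{p,q}(Q)}=\|g\|_{{\mathcal M}^s_r(Q)}$.

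It remains to check that $(1,|g|)$ is a pair of weights on $Q$. The constant $1$ is a weight. For $|g|$, local integrability follows from $g\in{\mathcal M}^s_r(Q)\subset L^r(Q)\subset L^1(Q)$. If $g$ vanishes on a set of positive measure, we replace $v$ by $|g|+\varepsilon$, for which $\|\,|g|+\varepsilon\|_{{\mathcal M}^s_r(Q)}\le\|g\|_{{\mathcal M}^s_r(Q)}+\varepsilon\|1\|_{{\mathcal M}^s_r(Q)}$ (the last term is finite on the fixed cube $Q$, up to the quasi-norm constant). We then let $\varepsilon\to0$ by monotone convergence on the left side. This positivity issue is the only step needing care; the rest is the exponent bookkeeping above. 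Alternatively, one can note that the proof of Theorem \ref{main:2weight-1} uses positivity only for $w$.
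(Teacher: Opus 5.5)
Your proposal is correct and is the paper's intended argument. The paper states this theorem as a corollary of Theorem \ref{main:2weight-1}, taking $(w,v)=(1,|g|)$ and using the index $r/q>1$ in place of $r$; then $\frac1n-\frac1p+\frac1q=\frac1s$ makes the ${\mathcal A}^{r/q,r/q}_{p,q}(Q)$-seminorm equal to $\|g\|_{{\mathcal M}^s_r(Q)}$. Your $\varepsilon$-regularization for positivity of $v$ is a harmless detail the paper leaves implicit; since $r>1$ the Morrey quantity is a genuine norm, and no limit is needed on the left because $|g|\le|g|+\varepsilon$.
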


\begin{theorem}\label{main:1weight-2}
Let $1<p<\infty$, $0<q,r<\infty$ and $0<s<\rho<\infty$ satisfy
\[
p>q,
\quad
\frac1q=\frac1p+\frac1\rho,
\quad
\frac1q=\frac1p+\frac1s-\frac1n.
\]
Then, for all $Q\in{\mathcal Q}({\mathbb R}^n)$ and $f\in Lip(Q)$, the following assertions hold{\rm :}
\begin{itemize}
\item[{\rm (1)}] If $1<q<r$, then, for all $g\in{\mathcal M}^s_{r,\rho}(Q)$,
\[
\|g(f-f_Q)\|_{L^q(Q)}
\lesssim
\|g\|_{{\mathcal M}^s_{r,\rho}(Q)}
\|\nabla f\|_{L^p(Q)}.
\]

\item[{\rm (2)}] If $q\le1$, then, for all $g\in{\mathcal M}^s_{q,\rho}(Q)$,
\[
\|g(f-f_Q)\|_{L^q(Q)}
\lesssim
\|g\|_{{\mathcal M}^s_{q,\rho}(Q)}
\|\nabla f\|_{L^p(Q)}.
\]
\end{itemize}
\end{theorem}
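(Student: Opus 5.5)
The plan is to deduce Theorem~\ref{main:1weight-2} from Theorem~\ref{main:2weight-2} with the pair $(w,v)=(1,|g|)$. The whole proof then reduces to one comparison: the ${\mathcal A}$-seminorm of $(1,|g|)$ should be bounded by the Bourgain--Morrey norm of $g$.

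\textbf{Part (1).} Here $q>1$. Choose the auxiliary exponent $\tilde r=r/q>1$ so that $q\tilde r=r$, and apply Theorem~\ref{main:2weight-2}(1) with the pair $(1,|g|)$ in the class ${\mathcal A}^{\tilde r,\tilde r}_{p,q,\rho}(Q)$. That theorem requires the same exponent in both slots, but with $w=1$ the $w$-average equals $1$ whatever $r$ is, so the restriction costs nothing.

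For each $R\in{\mathcal D}(Q)$ the corresponding term is
\[
\frac{\ell_R}{|R|^{\frac1p-\frac1q}}
\left(\fint_R|g(x)|^{r}\,{\rm d}x\right)^{\frac1r}
=|R|^{\frac1n-\frac1p+\frac1q-\frac1r}\|g\|_{L^r(R)}.
\]
The hypothesis $\frac1q=\frac1p+\frac1s-\frac1n$ gives $\frac1n-\frac1p+\frac1q=\frac1s$, so the term equals $|R|^{\frac1s-\frac1r}\|g\|_{L^r(R)}$. Taking the $\ell^\rho$ sum over $R\in{\mathcal D}(Q)$ yields exactly $\|g\|_{{\mathcal M}^s_{r,\rho}(Q)}$. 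The relation $\frac1q=\frac1p+\frac1\rho$ is assumed, so Theorem~\ref{main:2weight-2}(1) gives the inequality with constant $\|g\|_{{\mathcal M}^s_{r,\rho}(Q)}$.

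\textbf{Part (2).} Here $q\le1$, and we apply Theorem~\ref{main:2weight-2}(2) with $\tilde r=1$ and any $r>1$. The $v$-factor is then $(\fint_R|g|^q)^{1/q}$. The same computation gives the term $|R|^{\frac1s-\frac1q}\|g\|_{L^q(R)}$, and summing gives $[1,|g|]_{{\mathcal A}^{r,1}_{p,q,\rho}(Q)}=\|g\|_{{\mathcal M}^s_{q,\rho}(Q)}$.

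\textbf{Remaining checks.} We must confirm that $(1,|g|)$ is an admissible pair of weights. If $g$ vanishes on a set of positive measure, replace $|g|$ by $|g|+\varepsilon$ or by $\max(|g|,\varepsilon\,\cdot)$ and let $\varepsilon\to0$ by monotone convergence. Alternatively, note that Theorem~\ref{main:2weight-2}'s proof does not use $v^{-1}$. This approximation step is the only slightly delicate point.

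We should also check that the constraint $s<\rho$ is consistent: by the earlier remark, the Bourgain--Morrey space on $Q$ is nontrivial only when $s<\rho$. The parameter restriction $1<q<r$ in part (1) is exactly what makes $\tilde r=r/q>1$ legitimate. Otherwise the argument is a direct exponent computation, and I expect no real obstacle beyond the bookkeeping $\frac{\ell_R}{|R|^{1/p-1/q}}|R|^{-1/r}=|R|^{1/s-1/r}$.
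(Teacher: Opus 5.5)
Your proposal is correct and matches the paper, which states this result as a corollary of Theorem~\ref{main:2weight-2} with $(w,v)=(1,|g|)$; your auxiliary exponent $r/q$ in part (1) (resp.\ $\tilde r=1$ in part (2)), combined with $\frac1n-\frac1p+\frac1q=\frac1s$, turns the ${\mathcal A}$-seminorm into the Bourgain--Morrey norm. For the $\varepsilon$-regularization, it is exactly $s<\rho$ that makes $\|1\|_{{\mathcal M}^s_{r,\rho}(Q)}\approx|Q|^{1/s}$ finite, so $\bigl\||g|+\varepsilon\bigr\|_{{\mathcal M}^s_{r,\rho}(Q)}\lesssim\|g\|_{{\mathcal M}^s_{r,\rho}(Q)}+\varepsilon|Q|^{1/s}$ (and likewise in ${\mathcal M}^s_{q,\rho}(Q)$); alternatively, as you note, the proof of Theorem~\ref{main:2weight-2} never uses $v^{-1}$.
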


Moreover, assuming $p<n$, we can include the critical case $q=r$.
In particular, we do not need to distinguish between the cases $q>1$ and $q\le1$.

\begin{theorem}\label{main:1weight-3}
Let $1<p<n$, $0<q<\infty$ and $0<s<\rho\le\infty$ satisfy
\[
\frac1q=\frac1p+\frac1s-\frac1n.
\]
Assume that
\begin{itemize}
\item[{\rm (1)}] $\rho=\infty$ when $p<q$, and

\item[{\rm (2)}] $1/q=1/p+1/\rho$ when $p>q$.
\end{itemize}
Then, for all $Q\in{\mathcal Q}({\mathbb R}^n)$, $f\in Lip(Q)$ and $g\in{\mathcal M}^s_{q,\rho}(Q)$,
\[
\|g(f-f_Q)\|_{L^q(Q)}
\lesssim
\|g\|_{{\mathcal M}^s_{q,\rho}(Q)}
\|\nabla f\|_{L^p(Q)}.
\]
\end{theorem}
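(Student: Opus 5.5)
The plan is to dominate $|f-f_Q|$ pointwise by a dyadic fractional integral of $h\equiv|\nabla f|\chi_Q$, replace that dyadic sum by a sparse one, and then estimate the weighted $L^q$ norm cube by cube. Here $g$ sits in $L^q$ on every cube, not in some $L^r$ with $r>q$. So the Hölder step $\|g\cdot\|_{L^q(R)}\le\|g\|_{L^r(R)}\|\cdot\|_{L^{qr/(r-q)}(R)}$, which I expect drives Theorems \ref{main:1weight-1} and \ref{main:1weight-2}, is unavailable. The hypothesis $p<n$ has to compensate for this.

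\textbf{Step 1 (pointwise bound).} By the standard representation $|f(x)-f_Q|\lesssim\int_Q|\nabla f(y)||x-y|^{1-n}\,{\rm d}y$ for $x\in Q$ and a three-lattice (or dyadic-chain) argument, I would bound
\[
|f(x)-f_Q|\lesssim\sum_{R\in{\mathcal D}(Q)}\ell_R\langle h\rangle_R\chi_R(x),
\qquad \langle h\rangle_R=\fint_R h .
\]
Finitely many shifted grids inside a fixed dilate of $Q$ may be needed here; this is routine. Next I would run a Calder\'on--Zygmund stopping time on $\langle h\rangle_R$ to pass to a sparse family ${\mathcal S}\subset{\mathcal D}(Q)$ with pairwise disjoint sets $E_R\subset R$, $|E_R|\ge\frac12|R|$. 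The aim is
\[
\sum_{R\in{\mathcal D}(Q)}\ell_R\langle h\rangle_R\chi_R\lesssim\sum_{R\in{\mathcal S}}\ell_R\langle h\rangle_R\chi_R .
\]
The geometric factor $\ell_R$ lets the non-stopping cubes between consecutive stopping cubes be summed as a geometric series.

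\textbf{Step 2 (the case $q\le1$, and the scaling identity).} Using the $q$-triangle inequality,
\[
\|g(f-f_Q)\|_{L^q(Q)}^q\lesssim\sum_{R\in{\mathcal S}}\bigl(\ell_R\langle h\rangle_R\bigr)^q\|g\|_{L^q(R)}^q
=\sum_{R\in{\mathcal S}}a_R^q\,b_R^q .
\]
Here $a_R=|R|^{\frac1s-\frac1q}\|g\|_{L^q(R)}$ and $b_R=|R|^{\frac1p}\langle h\rangle_R$. The factorization is exact because $\ell_R|R|^{-\frac1p}=|R|^{\frac1s-\frac1q}$, which is precisely the relation $\frac1q=\frac1p+\frac1s-\frac1n$. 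Sparseness then gives
\[
\sum_{R\in{\mathcal S}}b_R^p\lesssim\sum_{R\in{\mathcal S}}\langle h\rangle_R^p|E_R|\le\|Mh\|_{L^p}^p\lesssim\|\nabla f\|_{L^p(Q)}^p .
\]
If $p>q$, Hölder's inequality in the sum with $\frac1q=\frac1p+\frac1\rho$ yields the bound $\|g\|_{{\mathcal M}^s_{q,\rho}(Q)}\|\nabla f\|_{L^p(Q)}$. If $p<q$ and $\rho=\infty$, then $\sup_R a_R=\|g\|_{{\mathcal M}^s_q(Q)}$ and $\ell^p\subset\ell^q$ finish the argument. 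The case $p=q$ is included in the same way.

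\textbf{Step 3 (the case $q>1$; main obstacle).} For $q>1$ the $q$-triangle inequality fails. Expanding $\int(\sum_R c_R\chi_R)^q|g|^q$ produces cross terms $c_R\bigl(\sum_{R'\supseteq R}c_{R'}\bigr)^{q-1}$ that have to be controlled. This is where I expect $p<n$ to enter. By Hölder's inequality,
\[
c_{R'}=\ell_{R'}\langle h\rangle_{R'}\le\ell_{R'}^{1-\frac np}\|h\|_{L^p(R')}.
\]
The first attempt is a Hedberg-type split at each $x$. I would bound the contribution of small cubes by $\ell^{\,1}Mh(x)$. For large cubes I would use the decay of $\ell_{R'}^{1-n/p}$, which is summable upward precisely because $p<n$. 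This should reduce the $q$-th power to a single scale, i.e.\ to a "$\sup_{R\ni x}$" in place of "$\sum_{R\ni x}$" up to constants. With that reduction,
\[
\Bigl(\sum_{R\in{\mathcal S}}c_R\chi_R\Bigr)^q\lesssim\sum_{R\in{\mathcal S}}c_R^q\chi_R
\]
would hold after a further stopping argument, and Step 2 applies verbatim. Making this last inequality rigorous, with constants independent of $Q$, $g$ and $f$, is the step I expect to be hardest. If the Hedberg split does not close, my fallback is a duality argument: test against $\phi\in L^{q'}(|g|^q)$ and use the sparse bound for the pairing $\sum_R c_R\int_R\phi|g|^q$.
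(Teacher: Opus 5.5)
Your Steps 1--2 are correct, but they only settle $q\le1$. There the $q$-triangle inequality removes all cross terms, and your factorization $\ell_R|R|^{-1/p}=|R|^{\frac1s-\frac1q}$, followed by H\"older in $\ell^\rho$ and the sparse bound, is exactly the computation of Lemma~\ref{lem:OSCa-sparse} with $\alpha=0$, i.e.\ the proof of Theorem~\ref{main:2weight-2}~(2) with $(w,v)=(1,|g|)$. Step~1 is cleaner as in the paper: Lemma~\ref{lem:osc-sparse} plus the $L^1$ Poincar\'e inequality on each $R$ gives ${\mathcal S}\subset{\mathcal D}(Q)$ directly, with no shifted cubes that the Morrey norm on ${\mathcal D}(Q)$ does not see. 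Since $p>1$, the condition $q\le1$ forces $p>q$. So your remarks on $p<q$ and $p=q$ inside Step~2 are vacuous, and $p=q$ has to be excluded anyway by Theorem~\ref{main:1weight-4}.

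Everything else rests on Step~3, which is not proved: all of case (1), and the range $1<q<p$ of case (2). Worse, the inequality you aim for there, $(\sum_{R\in{\mathcal S}}c_R\chi_R)^q\lesssim\sum_{R\in{\mathcal S}}c_R^q\chi_R$, is false for $q>1$. Take $h(y)=|y-c|^{-1}$ with $c$ a corner of $Q$; this is in $L^p$ since $p<n$. Every cube has $c_R\lesssim1$, yet any ${\mathcal S}$ dominating the dyadic sum has $\sum_{R\in{\mathcal S}}c_R\chi_R(x)\gtrsim\log(\ell_Q/|x-c|)$. Since $c_R^q\lesssim c_R$, your inequality would force this sum to stay bounded.

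The missing idea is the one behind the paper's parameter $\alpha$: do Hedberg's split with a \emph{fractional} maximal function instead of $Mh$. Fix $x\in R_0$ and $0<\beta<1$ (the paper's $\beta=1-n\alpha$), and set $M^{\mathcal S}_\beta h=\sup_{R\in{\mathcal S}}\chi_R\,\ell_R^\beta\langle h\rangle_R$.
\begin{itemize}
\item The cubes $R\subseteq R_0$ contribute at most $C\ell_{R_0}^{1-\beta}M^{\mathcal S}_\beta h(x)$.
\item The cubes $R\supsetneq R_0$ contribute at most $C\ell_{R_0}^{1-n/p}\|h\|_{L^p}$; this is where $p<n$ is used.
\item Optimizing over $R_0$ gives $\sum_{R\in{\mathcal S}}c_R\chi_R\lesssim\|h\|_{L^p}^{1-\theta}(M^{\mathcal S}_\beta h)^\theta$ with $\theta=\frac{n/p-1}{n/p-\beta}$.
\end{itemize}
A supremum has no cross terms. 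Using $\theta(\frac1p-\frac\beta n)=\frac1p-\frac1n=\frac1q-\frac1s$, one gets $\int_Q|g|^q(M^{\mathcal S}_\beta h)^{\theta q}\le\sum_{R\in{\mathcal S}}\|g\|_{L^q(R)}^q(\ell_R^\beta\langle h\rangle_R)^{\theta q}\le\|g\|_{{\mathcal M}^s_q(Q)}^q\sum_{R\in{\mathcal S}}(|R|^{1/p}\langle h\rangle_R)^{\theta q}$. The last sum is $\lesssim\|h\|_{L^p}^{\theta q}$ once $\theta q\ge p$. Taking $\beta$ near $1$ arranges this exactly when $p<q$, which proves case (1); it is Adams' trace inequality.

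The paper instead writes the small-cube term through ${\rm OSC}_{{\mathcal S},\alpha}$, i.e.\ with $\chi_{E_R}$ in place of $\chi_R$. That pointwise bound is not justified; the supremum is what makes the argument work.

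For $1<q<p$, no argument can close Step~3, because the estimate itself fails there. Take, for example, $n=3$, $p=2$, $q=\frac32$, $s=2$, $\rho=6$.
\begin{itemize}
\item Let ${\rm d}\mu=|g|^q\,{\rm d}x$ be normalized Lebesgue measure on the $N$-th generation of a Cantor set of dimension $n-p$ inside $Q$.
\item Let $f=I_1[\varphi\,(I_1\mu)^{p'-1}]$, where $I_1\nu(x)=\int|x-y|^{1-n}\,{\rm d}\nu(y)$ and $\varphi$ is a smooth cutoff.
\item For every $R$ above the $N$-th generation scale one checks $a_R^\rho\lesssim\mu(R)$, so $\|g\|_{{\mathcal M}^s_{q,\rho}(Q)}^\rho\lesssim N$.
\item Also $\|\nabla f\|_{L^p}^p\lesssim\int(I_1\mu)^{p'}\approx N$.
\item On the other hand, $\|g(f-f_Q)\|_{L^q(Q)}\ge\int f\,{\rm d}\mu-|f_Q|\ge\int\varphi\,(I_1\mu)^{p'}-CN^{1/p}\gtrsim N$.
\end{itemize}
Since $N^{\frac1\rho+\frac1p}=N^{\frac1q}=o(N)$, the inequality cannot hold with a uniform constant. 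What is actually provable is case (1) together with $q\le1$, and the latter is your Step~2.
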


However, the double critical case $p=q=r$ does not hold.

\begin{theorem}\label{main:1weight-4}
Let $1<p\le n$.
Then, for all $C>0$, there exist $Q\in{\mathcal Q}({\mathbb R}^n)$, $f\in Lip(Q)$ and $g\in{\mathcal M}^n_p(Q)$ such that
\[
\|g(f-f_Q)\|_{L^p(Q)}
>C
\|g\|_{{\mathcal M}^n_p(Q)}
\|\nabla f\|_{L^p(Q)}.
\]
\end{theorem}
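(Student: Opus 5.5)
The idea is to disprove the trace inequality $\int_Q|f-f_Q|^p\,{\rm d}\mu\lesssim\int_Q|\nabla f|^p\,{\rm d}x$ with $\mu=g^p\,{\rm d}x$. At the double endpoint $p=q=r$, $s=n$, the Morrey condition $\|g\|_{{\mathcal M}^n_p(Q)}<\infty$ says only $\mu(R)\lesssim\ell_R^{\,n-p}$ for every $R\in{\mathcal D}(Q)$. That is a one-scale condition. A Kerman--Sawyer/Maz'ya type trace inequality needs control summed over all scales, so it should fail by a factor growing like a power of the number of scales $N$. I will build a family $(g_N,f_N)$ on $Q=[0,1)^n$ with $\|g_N\|_{{\mathcal M}^n_p(Q)}\lesssim1$, $\|\nabla f_N\|_{L^p(Q)}\lesssim1$ and $\|g_N(f_N-(f_N)_Q)\|_{L^p(Q)}\to\infty$. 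A single radial weight such as $g=|x|^{-1}$ will not work for $p<n$, because Hardy's inequality holds there. The construction therefore has to be fractal.

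\emph{Construction of $g_N$.} Take a self-similar Cantor set $K\subset\frac12Q$ of dimension $d=n-p$, with generation-$k$ cubes $\{R_{k,i}\}$ of side $\ell_k=\lambda^k$. There are $M^k$ of them, with $M\lambda^{n-p}=1$; when $p=n$, take $d=0$, i.e.\ a single point and $M=1$. Let $\sigma_N$ be the normalized measure at depth $N$, $\sigma_N(R_{k,i})=M^{-k}=\ell_k^{\,n-p}$. Define
\[
g_N^p\equiv\sum_{i}\ell_N^{-p}\chi_{R_{N,i}},
\]
spread uniformly over the deepest cubes. Then for a dyadic $R$ with $\ell_R\ge\ell_N$ one gets $\int_Rg_N^p\approx\sigma_N(R)\lesssim\ell_R^{\,n-p}$ (this uses $d=n-p$). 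For smaller $R$ the same bound follows from $\ell_N^{-p}|R|\le\ell_R^{\,n-p}$. Hence $\|g_N\|_{{\mathcal M}^n_p(Q)}\lesssim1$ uniformly in $N$.

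\emph{Test function $f_N$.} I would like $f_N\approx N^{a}$ on $\bigcup_iR_{N,i}$ and $f_N=0$ on the outer half of $Q$, so that $(f_N)_Q$ is at most a fixed fraction of $N^a$. The natural choice is a truncated potential,
\[
f_N(x)=\sum_{k=1}^N\phi\bigl(\ell_k^{-1}{\rm dist}(x,K_k)\bigr),
\]
where $K_k$ is the generation-$k$ union and $\phi$ is a Lipschitz cutoff equal to $1$ near $0$. Each layer contributes $\int|\nabla\phi_k|^p\approx M^k\ell_k^{\,n-p}=1$. The layers live on essentially disjoint annular regions, since $\lambda$ is small. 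This gives $\|\nabla f_N\|_p^p\approx N$ and $f_N\approx N$ on $K_N$.

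The left side is then $\int g_N^p|f_N-(f_N)_Q|^p\gtrsim N^p\sigma_N(K_N)=N^p$. The ratio of the two sides is $N^{p}/N=N^{p-1}\to\infty$, because $p>1$. This exponent $p-1>0$ is exactly the point where $p>1$ enters.

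\emph{Main obstacle.} The hardest step is the gradient estimate. I need the layers $\nabla\phi_k$ to overlap so little that $\int|\sum_k\nabla\phi_k|^p\lesssim\sum_k\int|\nabla\phi_k|^p$. This requires choosing $\lambda$ small relative to the cutoff width so that the supports are disjoint shells. For $p=n$ the same scheme becomes the classical $\log$-capacity example: $K$ is a point, $g_N^n=\ell_N^{-n}\chi_{B(0,\ell_N)}$, and $f_N=\log(1/|x|)$ truncated at $\ell_N$ and $1$. A direct computation is easy there and gives ratio $\approx N^{n-1}$. A secondary check is that $(f_N)_Q$ stays bounded by a fixed fraction of $N$. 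This holds because $f_N$ is large only on a set of tiny measure. Choosing $N$ large then yields the claimed $Q$, $f$ and $g$ for any given $C$.
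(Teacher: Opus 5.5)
Your construction is correct, and it takes a different route from the paper. The paper argues by contradiction. Assuming the local inequality on every cube, it runs the globalization step of Section~\ref{s:application}: the extra term $\|g\|_{L^p(Q)}|f_Q|\le\|g\|_{{\mathcal M}^n_p(Q)}\ell_Q^{-1}\|f\|_{L^p}$ vanishes as $\ell_Q\to\infty$. This yields the global bound $\|g\cdot f\|_{L^p}\lesssim\|g\|_{{\mathcal M}^n_p}\|\nabla f\|_{L^p}$, and the paper then cites the counterexample of \cite[Proposition 4.1]{SST11}.

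You instead exhibit an explicit family on one fixed cube. The $(n-p)$-dimensional Cantor measure saturates the single-scale Morrey condition at every scale. The layered potential gains one unit of height per scale at the cost of one unit of $\|\nabla f\|_{L^p}^p$, so the ratio of the two sides grows like $N^{1-1/p}$. The paper's argument is two lines but outsources the actual counterexample. Yours is self-contained, shows the failure already on a single cube (hence on all cubes by scaling), quantifies the blow-up, and makes visible exactly where $p>1$ enters.

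The steps you flag as obstacles close as you expect:
\begin{itemize}
\item Take $\phi=1$ on $[0,c_1]$ and $\phi=0$ on $[c_2,\infty)$. For $j<k$, the support of $\nabla\phi_k$ lies within distance $c_2\lambda^{k-j}\ell_j<c_1\ell_j$ of $K_j$, where $\phi_j\equiv1$. So the gradients are disjointly supported once $\lambda<c_1/c_2$.
\item This is compatible with $M=\lambda^{-(n-p)}\in\mathbb N$ and with fitting $M$ disjoint subcubes, by taking $M$ large.
\item A cube $R$ with $\ell_{k+1}<\ell_R\le\ell_k$ meets at most $3^n$ generation-$k$ cubes, which gives $\int_Rg_N^p\le 3^nM\ell_R^{n-p}$.
\item Finally, $\int_Qf_N\lesssim\sum_kM^k\ell_k^n=\sum_k\ell_k^p\lesssim1$, so $(f_N)_Q$ is bounded uniformly in $N$.
\end{itemize}
One normalization to fix: the generation-$0$ cube (side $\ell_0=1$ if $\ell_k=\lambda^k$) cannot sit inside $\tfrac12Q$. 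Take $\ell_k=c\lambda^k$, which changes only the constants.
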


\section{Preliminaries}\label{s:preliminaries}

In this section, we introduce some lemmas to prove our main results.
By the well-known $L^p$-boundedness of the Hardy--Littlewood maximal operator for $p>1$, we can see the boundedness of the local Hardy--Littlewood maximal operator $M^{(\eta)}_Q$ with respect to $Q\in{\mathcal Q}({\mathbb R}^n)$ and $\eta>0$ defined by
\[
M_Qf(x)
\equiv
\sup_{R\in{\mathcal D}(Q)}
\frac{\chi_R(x)}{|R|}
\int_R|f(y)|\,{\rm d}y,
\quad
x\in{\mathbb R}^n,
\]
and $M^{(\eta)}_Qf\equiv(M[|f|^\eta\chi_Q])^{1/\eta}$ for $\eta>0$.

\begin{lemma}\label{lem:local-HL}
Let $Q\in{\mathcal Q}({\mathbb R}^n)$, and let $0<\eta,p\le\infty$.
If $p>\eta$, then, for all $f\in L^p(Q)$,
\[
\|M^{(\eta)}_Qf\|_{L^p(Q)}
\lesssim
\|f\|_{L^p(Q)}.
\]
\end{lemma}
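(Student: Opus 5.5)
The plan is to reduce Lemma~\ref{lem:local-HL} to the classical $L^t$-boundedness of the Hardy--Littlewood maximal operator for $1<t\le\infty$, applied to the power $|f|^\eta$. We interpret $M$ in the definition of $M^{(\eta)}_Q$ as the localized dyadic operator $M_Q$; in any case $M_Q h\le Mh$ pointwise, where $M$ is the uncentered maximal operator over all cubes. Once we pass through $M$, whether or not the supremum is restricted to ${\mathcal D}(Q)$ does not matter.

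Set $h\equiv|f|^\eta\chi_Q$ and $t\equiv p/\eta$, so that $t\in(1,\infty]$ because $p>\eta$. For $p<\infty$ we compute
\[
\|M^{(\eta)}_Qf\|_{L^p(Q)}^p
=
\int_Q (M_Q h(x))^{\frac p\eta}\,{\rm d}x
\le
\int_{{\mathbb R}^n}(Mh(x))^{t}\,{\rm d}x
\lesssim
\int_{{\mathbb R}^n} h(x)^t\,{\rm d}x
=
\|f\|_{L^p(Q)}^p,
\]
using the strong-type $(t,t)$ bound for $M$ with $t>1$. The implicit constant depends only on $n$, $p$ and $\eta$, and not on $Q$, since $M$ is translation and dilation invariant. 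Taking $p$-th roots gives the claim.

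For the case $p=\infty$ (with $\eta<\infty$), we argue directly. Every average satisfies $\fint_R h\le\|h\|_{L^\infty}=\|f\|_{L^\infty(Q)}^\eta$, so $M^{(\eta)}_Qf\le\|f\|_{L^\infty(Q)}$ pointwise.

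The only genuine ingredient is the Hardy--Littlewood maximal theorem, which we take as known: weak $(1,1)$ via a covering lemma or the dyadic Calder\'on--Zygmund decomposition, then Marcinkiewicz interpolation with $L^\infty$. For the dyadic $M_Q$ one can even get the explicit constant $t'$ from Doob's inequality. The main point to check is just that the hypothesis $p>\eta$ is exactly what makes $t>1$, so that the strong-type bound applies. There is no real obstacle beyond this bookkeeping.
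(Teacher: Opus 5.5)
Your proof is correct and follows the paper's route: the paper gives no separate argument and simply appeals to the $L^t$-boundedness of the Hardy--Littlewood maximal operator for $t>1$, which you apply to $|f|^\eta\chi_Q$ with $t=p/\eta$. Your pointwise bound $M_Qh\le Mh$ also covers the paper's definition of $M^{(\eta)}_Q$, which is written with $M$ rather than $M_Q$, and you treat the case $p=\infty$ directly.
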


By the mean value theorem, we can easily extract the derivative of $f$ from the deference quotient of $f$.

\begin{lemma}[{\cite[Lemma 11.1.3]{Jost12}}]\label{lem:PWO}
For all $Q\in{\mathcal Q}({\mathbb R}^n)$ and $f\in Lip(Q)$,
\[
|f(x)-f_Q|
\lesssim
\int_Q
\frac{|\nabla f(y)|}{|x-y|^{n-1}}
\,{\rm d}y,
\quad
\text{a.e. $x\in Q$}.
\]
\end{lemma}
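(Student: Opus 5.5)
We plan to prove Lemma \ref{lem:PWO} by the classical argument: write the difference $f(x)-f(y)$ as a line integral along the segment from $x$ to $y$, average over $y\in Q$, and pass to polar coordinates centred at $x$. We argue first for $f\in C^1(\overline{Q})$ and then treat general Lipschitz $f$ by approximation, as explained at the end.

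Fix $x\in Q$. For $y\in Q$ write $y=x+\rho\omega$ with $\rho=|x-y|$ and $\omega\in S^{n-1}$. Since $Q$ is convex, the segment $[x,y]$ lies in $Q$, so the fundamental theorem of calculus along it gives
\[
|f(x)-f(y)|\le\int_0^{\rho}|\nabla f(x+t\omega)|\,{\rm d}t .
\]
Averaging over $y\in Q$,
\[
|f(x)-f_Q|\le\fint_Q|f(x)-f(y)|\,{\rm d}y
\le\frac1{|Q|}\int_Q\int_0^{|x-y|}V(x+t\omega)\,{\rm d}t\,{\rm d}y,
\]
where $V\equiv|\nabla f|\chi_Q$ is extended by zero outside $Q$.

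Next we enlarge the domain of integration. Every $y\in Q$ satisfies $|x-y|\le d\equiv\sqrt n\,\ell_Q$, and we bound the inner integral by $\int_0^\infty V(x+t\omega)\,{\rm d}t$. Polar coordinates around $x$ then give
\[
|f(x)-f_Q|\le\frac1{|Q|}\int_{S^{n-1}}\int_0^{d}\rho^{n-1}\,{\rm d}\rho\int_0^\infty V(x+t\omega)\,{\rm d}t\,{\rm d}\omega
=\frac{d^n}{n|Q|}\int_{S^{n-1}}\int_0^\infty V(x+t\omega)\,{\rm d}t\,{\rm d}\omega .
\]
Reversing the polar change of variables with $z=x+t\omega$, so that ${\rm d}z=t^{n-1}\,{\rm d}t\,{\rm d}\omega$, the last double integral equals $\int_{{\mathbb R}^n}V(z)|x-z|^{1-n}\,{\rm d}z=\int_Q|\nabla f(z)||x-z|^{1-n}\,{\rm d}z$. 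Since $d^n/(n|Q|)=n^{n/2-1}$ depends only on $n$, this yields the claim with that constant.

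The main obstacle is making the argument rigorous for merely Lipschitz $f$. One route is a Fubini argument: by Rademacher's theorem $\nabla f$ exists a.e., and $f$ restricted to almost every line is absolutely continuous with derivative $\nabla f\cdot\omega$, so the line-integral estimate holds for a.e.\ pair $(x,y)$. The alternative, which we would try first because it is cleaner, is to mollify. Take $f_\varepsilon\to f$ uniformly on $Q$ with $\nabla f_\varepsilon\to\nabla f$ a.e.\ and $|\nabla f_\varepsilon|$ uniformly bounded; for instance, extend $f$ by a Lipschitz extension and convolve. The left-hand side converges pointwise. On the right, the kernel $|x-\cdot|^{1-n}$ is integrable on $Q$, so dominated convergence passes the limit through the integral. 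This gives the inequality for every $x\in Q$, and in particular a.e.
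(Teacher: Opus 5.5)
Your proposal is correct and is essentially the classical argument behind the result the paper cites (Jost, Lemma 11.1.3, going back to Gilbarg--Trudinger's Lemma 7.16): integrate $\nabla f$ along segments, which stay in $Q$ by convexity, average in $y$, enlarge $Q$ to $B(x,\sqrt n\,\ell_Q)$, and pass to polar coordinates, which gives the constant $n^{n/2-1}$. The paper itself offers only this citation and a mean-value-theorem remark, and your approximation step (Lipschitz extension, mollification, then dominated convergence against the kernel $|x-\cdot|^{1-n}$, which is integrable on $Q$) correctly handles the passage from smooth to merely Lipschitz $f$.
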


A collection ${\mathcal S}\subset{\mathcal Q}({\mathbb R}^n)$ is called a sparse family if for every $Q\in{\mathcal S}$, there exists a measurable subset $E_Q\subset Q$ such that:
\begin{itemize}
\item[(1)] $|Q|\le2|E_Q|$;
\item[(2)] The family $\{E_Q\}_{Q\in{\mathcal S}}$ is pairwise disjoint.
\end{itemize}
Then it is known the following the domination of the oscillation of a function, which is a simplified version of Lemma 5.1 in \cite{LOR17}.

\begin{lemma}\label{lem:osc-sparse}
Let $Q\in{\mathcal Q}({\mathbb R}^n)$.
Then there exists a sparse family ${\mathcal S}\subset{\mathcal D}(Q)$ such that
\[
|f(x)-f_Q|
\lesssim
\sum_{R\in{\mathcal S}}
\chi_R(x)
\fint_R|f(y)-f_R|\,{\rm d}y
\quad
\text{a.e. $x\in Q$}.
\]
\end{lemma}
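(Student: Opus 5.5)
Here is the plan: the statement is Lemma \ref{lem:osc-sparse}, the sparse domination of the oscillation. I would prove it by a Calder\'on--Zygmund-type stopping time on ${\mathcal D}(Q)$, following the standard local-mean-oscillation argument, in its simplified form where the median/oscillation is replaced by the $L^1$ mean oscillation.

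\textbf{First step: one level of the construction.} Fix $R\in{\mathcal D}(Q)$ and put $\omega_R\equiv\fint_R|f-f_R|$. Let ${\mathcal E}(R)$ be the maximal cubes $P\in{\mathcal D}(R)$, $P\neq R$, with
\[
\fint_P|f-f_R|>K\omega_R,
\]
where $K=2^{n+1}$. By the weak-type $(1,1)$ bound for the dyadic maximal operator (this is the endpoint behind Lemma \ref{lem:local-HL}), we get $\sum_{P\in{\mathcal E}(R)}|P|\le K^{-1}\omega_R^{-1}\int_R|f-f_R|=|R|/K\le|R|/2$. By maximality, the dyadic parent $\widehat P$ satisfies $\fint_{\widehat P}|f-f_R|\le K\omega_R$. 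Hence
\[
|f_P-f_R|\le\fint_P|f-f_R|\le 2^n\fint_{\widehat P}|f-f_R|\le 2^nK\omega_R .
\]
Also, by Lebesgue differentiation, $|f(x)-f_R|\le K\omega_R$ for a.e.\ $x\in R\setminus\bigcup_{P\in{\mathcal E}(R)}P$.

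\textbf{Second step: iterate.} Set ${\mathcal S}_0=\{Q\}$ and ${\mathcal S}_{k+1}=\bigcup_{R\in{\mathcal S}_k}{\mathcal E}(R)$, and take ${\mathcal S}=\bigcup_k{\mathcal S}_k$. For $x\in R\in{\mathcal S}_k$ write
\[
f(x)-f_R=(f(x)-f_R)\chi_{R\setminus\cup{\mathcal E}(R)}(x)+\sum_{P\in{\mathcal E}(R)}\bigl[(f_P-f_R)+(f(x)-f_P)\bigr]\chi_P(x).
\]
Telescoping down the generations gives, for a.e.\ $x$,
\[
|f(x)-f_Q|\le(2^nK+K)\sum_{R\in{\mathcal S}}\chi_R(x)\,\omega_R .
\]
The telescoping terminates for a.e.\ $x$ because the sets $\Omega_k=\bigcup{\mathcal S}_k$ satisfy $|\Omega_k|\le2^{-k}|Q|$. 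Therefore a.e.\ point lies in only finitely many generations, and the remainder term vanishes. Note that $f\in L^1(Q)$ suffices here; for $f\in Lip(Q)$ this is automatic.

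\textbf{Third step: sparseness.} Define $E_R\equiv R\setminus\bigcup_{P\in{\mathcal E}(R)}P$. These sets are pairwise disjoint: for cubes in the same generation this holds by disjointness of the cubes, and descendants are removed from $E_R$. Moreover $|E_R|\ge|R|/2$ by the first step. Hence ${\mathcal S}$ is sparse with the constant $2$ required in the definition.

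The main point needing care is the choice of $K$. The same threshold must simultaneously control the jump $|f_P-f_R|$, through the parent cube, and give the packing bound $\le1/2$. The choice $K=2^{n+1}$ handles both, so I expect no genuine obstacle beyond bookkeeping of the telescoping sum.
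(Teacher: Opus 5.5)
Your argument is correct and is essentially the standard stopping-time proof of Lemma 5.1 in \cite{LOR17}, which the paper cites instead of giving its own proof. One small remark: the parent-cube bound $|f_P-f_R|\le 2^nK\omega_R$ holds for every $K$, so the only constraint on the threshold is the packing condition $K\ge 2$, and $K=2$ already works and gives a smaller constant.
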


Although this lemma suffices to prove Theorems \ref{main:2weight-1} and \ref{main:2weight-2}, we additionally apply the Hedberg method to the modification
\[
{\rm OSC}_{{\mathcal S},\alpha}f(x)
\equiv
\sum_{R\in{\mathcal S}}
\frac{\chi_{E_R}(x)}{|R|^\alpha}
\fint_R|f(y)-f_R|\,{\rm d}y
\]
of right hand side of this lemma to prove Theorem \ref{main:1weight-3}.
Therefore, we introduce the weighted Poincar\'e--Sobolev-type inequality for ${\rm OSC}_{{\mathcal S},\alpha}f$.

\begin{lemma}\label{lem:OSCa-sparse}
Let $\alpha\in{\mathbb R}$, $1<p<\infty$, $0<q<\infty$ and $0<s<\rho<\infty$ satisfy
\[
\frac1q=\frac1p+\frac1s-\frac1n+\alpha.
\]
Assume that
\begin{itemize}
\item[{\rm (1)}] $\rho=\infty$ when $p\le q$, and

\item[{\rm (2)}] $1/q=1/p+1/\rho$ when $p>q$.
\end{itemize}
Then, for all $Q\in{\mathcal Q}({\mathbb R}^n)$, $f\in Lip(Q)$ and sparse families ${\mathcal S}\subset{\mathcal D}(Q)$,
\[
\|g\cdot{\rm OSC}_{{\mathcal S},\alpha}f\|_{L^q(Q)}
\lesssim
\|g\|_{{\mathcal M}^s_{q,\rho}(Q)}
\|\nabla f\|_{L^p(Q)}.
\]
\end{lemma}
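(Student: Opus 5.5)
The plan is to reduce everything to a sparse sum of local gradient averages and then use the disjointness of the sets $E_R$, the Morrey norm of $g$, and the local maximal function bound of Lemma \ref{lem:local-HL}.

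\emph{Step 1: pointwise reduction.} For each $R\in{\mathcal S}$ I first apply the classical $(1,1)$ Poincar\'e inequality on $R$:
\[
\fint_R|f(y)-f_R|\,{\rm d}y\lesssim \ell_R\fint_R|\nabla f(y)|\,{\rm d}y .
\]
This follows by averaging Lemma \ref{lem:PWO} over $x\in R$, since $\sup_{y\in R}\int_R|x-y|^{1-n}\,{\rm d}x\lesssim\ell_R$. With $h\equiv|\nabla f|$ this gives
\[
{\rm OSC}_{{\mathcal S},\alpha}f(x)\lesssim\sum_{R\in{\mathcal S}}\chi_{E_R}(x)\,|R|^{\frac1n-\alpha}h_R .
\]
Because the $E_R$ are pairwise disjoint, at each point at most one term is nonzero. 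So for every $0<q<\infty$, with no need for the triangle inequality in $L^q$,
\[
\|g\cdot{\rm OSC}_{{\mathcal S},\alpha}f\|_{L^q(Q)}^q
\lesssim\sum_{R\in{\mathcal S}}|R|^{(\frac1n-\alpha)q}h_R^q\,\|g\|_{L^q(E_R)}^q
\le\sum_{R\in{\mathcal S}}|R|^{(\frac1n-\alpha)q}h_R^q\,\|g\|_{L^q(R)}^q .
\]

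\emph{Step 2: inserting the Morrey norm.} Set $m_R\equiv|R|^{\frac1s-\frac1q}\|g\|_{L^q(R)}$, so that $\|g\|_{L^q(R)}=|R|^{\frac1q-\frac1s}m_R$. The exponent relation $\frac1q=\frac1p+\frac1s-\frac1n+\alpha$ gives
\[
\frac1n-\alpha+\frac1q-\frac1s=\frac1p ,
\]
so each summand equals $(b_R m_R)^q$ with $b_R\equiv|R|^{\frac1p}h_R$. The key estimate to establish is
\[
\sum_{R\in{\mathcal S}}b_R^p\lesssim\|h\|_{L^p(Q)}^p .
\]
To prove it, use $|R|\le2|E_R|$ and $h_R\le\inf_{E_R}M_Qh$. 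This gives
\[
\sum_{R\in{\mathcal S}}|R|\,h_R^p\le2\sum_{R\in{\mathcal S}}\int_{E_R}(M_Qh)^p\le2\|M_Qh\|_{L^p(Q)}^p\lesssim\|h\|_{L^p(Q)}^p .
\]
The middle inequality uses the disjointness of the $E_R$, and the last one is Lemma \ref{lem:local-HL} with $\eta=1<p$.

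\emph{Step 3: the two cases.}
\begin{itemize}
\item[(1)] If $p\le q$ and $\rho=\infty$, bound $m_R\le\|g\|_{{\mathcal M}^s_q(Q)}$. Then use $\ell^p\subset\ell^q$:
\[
\sum_{R\in{\mathcal S}}b_R^q\le\Bigl(\sum_{R\in{\mathcal S}}b_R^p\Bigr)^{q/p}\lesssim\|\nabla f\|_{L^p(Q)}^q .
\]
\item[(2)] If $p>q$ and $\frac1q=\frac1p+\frac1\rho$, apply H\"older's inequality for sums with exponents $p/q$ and $\rho/q$, which are conjugate because $\frac qp+\frac q\rho=1$:
\[
\sum_{R\in{\mathcal S}}(b_Rm_R)^q\le\Bigl(\sum_{R\in{\mathcal S}}b_R^p\Bigr)^{q/p}\Bigl(\sum_{R\in{\mathcal D}(Q)}m_R^\rho\Bigr)^{q/\rho}\lesssim\|\nabla f\|_{L^p(Q)}^q\,\|g\|_{{\mathcal M}^s_{q,\rho}(Q)}^q .
\]
\end{itemize}
Taking $q$-th roots gives the claim in both cases.

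I expect the only delicate point to be Step 2, namely the sparse Carleson-type bound $\sum_{R\in{\mathcal S}}|R|h_R^p\lesssim\|h\|_{L^p(Q)}^p$. This is where $p>1$ is used. It is also why the weight $g$ must be paired with $\chi_{E_R}$ rather than $\chi_R$: disjointness makes the $L^q$-norm split into a sum even when $q\le1$.
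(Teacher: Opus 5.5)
Your proposal is correct and follows essentially the same route as the paper: you split the $L^q$ norm using the disjointness of the $E_R$, apply the $(1,1)$ Poincar\'e inequality obtained from Lemma \ref{lem:PWO}, absorb the Morrey norm through the exponent relation, and finish with $\ell^p\hookrightarrow\ell^q$ in case (1) or H\"older's inequality with exponents $p/q,\rho/q$ in case (2), together with the sparse bound $\sum_{R\in{\mathcal S}}|R|h_R^p\lesssim\|M_Qh\|_{L^p(Q)}^p\lesssim\|h\|_{L^p(Q)}^p$. You also write out the H\"older step for case (2) explicitly, which the paper leaves implicit.
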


\begin{proof}
We estimate
\begin{align*}
\int_Q
|g(x){\rm OSC}_{{\mathcal S},\alpha}f(x)|^q
\,{\rm d}x
&=
\sum_{R\in{\mathcal S}}
\int_{E_R}|g(x)|^q\,{\rm d}x
\left[
\frac1{|R|^\alpha}
\fint_R|f(y)-f_R|\,{\rm d}y
\right]^q\\
&\lesssim
\sum_{R\in{\mathcal S}}
\int_{E_R}|g(x)|^q\,{\rm d}x
\left[
\frac{\ell_R}{|R|^\alpha}
\fint_R|\nabla f(z)|\,{\rm d}z
\right]^q\\
&\le
\|g\|_{{\mathcal M}^s_{q,\rho}(Q)}^q
\left(
\sum_{R\in{\mathcal S}}
\left[
|R|^{\frac1p}
\fint_R|\nabla f(z)|\,{\rm d}z
\right]^p
\right)^{\frac qp},
\end{align*}
where the last inequality, we have used the embedding of sequence spaces $\ell^p\hookrightarrow\ell^q$ for the case (1), addtitionally.
Here, by Lemma \ref{lem:local-HL},
\begin{align*}
\sum_{R\in{\mathcal S}}
\left[
|R|^{\frac1p}
\fint_R|\nabla f(z)|\,{\rm d}z
\right]^p
&\lesssim
\sum_{R\in{\mathcal S}}
\bigl\|M_Q[|\nabla f|]\bigr\|_{L^p(E_R)}^p
\le
\bigl\|M_Q[|\nabla f|]\bigr\|_{L^p(Q)}^p\\
&\lesssim
\|\nabla f\|_{L^p(Q)}^p.
\end{align*}
Hence,
\[
\int_Q
|g(x){\rm OSC}_{{\mathcal S},\alpha}f(x)|^q
\,{\rm d}x
\lesssim
\|g\|_{{\mathcal M}^s_{q,\rho}(Q)}^q
\|\nabla f\|_{L^p(Q)}^q.
\]
\end{proof}

\section{Proof of Main theorems}\label{s:proof}

\subsection{Proof of Theorem \ref{main:2weight-1} and Theorem \ref{main:2weight-2} (1)}

Since $q>1$, by duality, we may show the estimate
\[
\int_Q
|h(x)|
\cdot
v(x)|f(x)-f_Q|
\,{\rm d}x
\lesssim
\|h\|_{L^{q'}}
[w,v]_{{\mathcal A}^{r,r}_{p,q,\rho}(Q)}
\bigl\|w|\nabla f|\bigr\|_{L^p(Q)}
\]
for all $h\in L^{q'}({\mathbb R}^n)$.
Here, the case $\rho=\infty$ stands for the conclusion of Theorem \ref{main:2weight-1}.

By Lemma \ref{lem:osc-sparse}, there exists a sparse family ${\mathcal S}\subset{\mathcal D}(Q)$ such that
\[
\int_Q
|h(x)|
\cdot
v(x)|f(x)-f_Q|
\,{\rm d}x
\lesssim
\sum_{R\in{\mathcal S}}
\int_R|h(x)|v(x)\,{\rm d}x
\fint_R|f(y)-f_R|\,{\rm d}y.
\]
On the first term of multiplication in this summation, by H\"older's inequality
\begin{align*}
\int_R|h(x)|v(x)\,{\rm d}x
&\le
\left(
\fint_R|h(x)|^{(qr)'}\,{\rm d}x
\right)^{\frac1{(qr)'}}
\left(
\fint_Rv(x)^{qr}\,{\rm d}x
\right)^{\frac1{qr}}
|R|\\
&\lesssim
\frac{\|M_Q^{((qr)')}h\|_{L^{q'}(E_R)}}{|R|^{\frac1{q'}}}
\left(
\fint_Rv(x)^{qr}\,{\rm d}x
\right)^{\frac1{qr}}
|R|.
\end{align*}
On the second term of multiplication in this summation, by Lemma \ref{lem:PWO} and H\"older's inequality,
\begin{align*}
\fint_R|f(y)-f_R|\,{\rm d}y
&\lesssim
\fint_R\int_R\frac{|\nabla f(z)|}{|y-z|^{n-1}}\,{\rm d}z{\rm d}y
\lesssim
\ell_R
\fint_R|\nabla f(z)|\,{\rm d}z\\
&\le
\ell_R
\left(
\fint_Rw(z)^{-\eta'}\,{\rm d}z
\right)^{\frac1{\eta'}}
\left(
\fint_R\bigl[w(z)|\nabla f(z)\bigr]^\eta\,{\rm d}z
\right)^{\frac1\eta}\\
&\lesssim
\ell_R
\left(
\fint_Rw(z)^{-\eta'}\,{\rm d}z
\right)^{\frac1{\eta'}}
\frac{\bigl\|
M_Q^{(\eta)}\bigl[w|\nabla f|\bigr]
\bigr\|_{L^p(E_R)}}{|R|^{\frac1p}},
\end{align*}
where $\eta$ is a parameter satisfying $\eta'=p'r$.
Then, combining these estimates, H\"older's inequality for sequence norm and Lemma \ref{lem:local-HL}, we have
\begin{align*}
\int_Q
|h(x)|
\cdot
v(x)|f(x)-f_Q|
\,{\rm d}x
&\lesssim
\|M_Q^{((qr)')}h\|_{L^{q'}(Q)}
[w,v]_{{\mathcal A}^{r,r}_{p,q,\rho}(Q)}
\bigl\|
M_Q^{(\eta)}\bigl[w|\nabla f|\bigr]
\bigr\|_{L^p(Q)}\\
&\lesssim
\|h\|_{L^{q'}(Q)}
[w,v]_{{\mathcal A}^{r,r}_{p,q,\rho}(Q)}
\bigl\|w|\nabla f|\bigr\|_{L^p(Q)}.
\end{align*}
Remark on the case $\rho=\infty$ in Theorem \ref{main:2weight-1} that we have used Minkowski's inequality as
\[
\left(
\sum_{R\in{\mathcal S}}
\bigl\|
M_Q^{(\eta)}\bigl[w|\nabla f|\bigr]
\bigr\|_{L^p(E_R)}^q
\right)^{\frac1q}
\le
\bigl\|
M_Q^{(\eta)}\bigl[w|\nabla f|\bigr]
\bigr\|_{L^p(Q)}.
\]
Hence, we finish the proof of Theorem \ref{main:2weight-1} and Theorem \ref{main:2weight-2} (1).

\subsection{Proof of Theorem \ref{main:2weight-2} (2)}

For any sparse fmaily ${\mathcal S}\subset{\mathcal D}(Q)$, setting
\[
{\rm osc}_{\mathcal S}f(x)
=
\sum_{R\in{\mathcal S}}
\chi_R(x)
\fint_R|f(y)-f_R|\,{\rm d}y,
\]
by Lemma \ref{lem:osc-sparse}, we may show the Poincar\'e--Sobolev-type estimate
\[
\|v\cdot{\rm osc}_{\mathcal S}f\|_{L^q(Q)}
\lesssim
[w,v]_{{\mathcal A}^{r,1}_{p,q,\rho}(Q)}
\bigl\|w|\nabla f|\bigr\|_{L^p(Q)}.
\]

Since $q-1\le0$, for each $R\in{\mathcal S}$,
\[
{\rm osc}_{\mathcal S}f(x)^{q-1}
\le
\left(
\fint_R|f(y)-f_R|\,{\rm d}y
\right)^{q-1}
\quad
a.e. \; x\in R.
\]
Then,
\begin{align*}
\int_Q
\bigl[
v(x){\rm osc}_{\mathcal S}f(x)
\bigr]^q
\,{\rm d}x
\le
\sum_{R\in{\mathcal S}}
\int_Rv(x)^q\,{\rm d}x
\left(
\fint_R|f(y)-f_R|\,{\rm d}y
\right)^q.
\end{align*}
Here, by the similar argument in the previous subsection,
\[
\fint_R|f(y)-f_R|\,{\rm d}y
\lesssim
\ell_R
\left(
\fint_Rw(z)^{-\eta'}\,{\rm d}z
\right)^{\frac1{\eta'}}
\frac{\bigl\|
M_Q^{(\eta)}\bigl[w|\nabla f|\bigr]
\bigr\|_{L^p(E_R)}}{|R|^{\frac1p}},
\]
where $\eta$ is a parameter satisfying $\eta'=p'r$.
Consequently, H\"older's inequality for the sequence norm and Lemma \ref{lem:local-HL}, similarly, we obtain the desired estimate.

\subsection{Proof of Theorem \ref{main:1weight-3}}

We set
\[
{\rm osc}_{\mathcal S}f(x)
=
\sum_{R\in{\mathcal S}}
\chi_R(x)
\fint_R|f(y)-f_R|\,{\rm d}y
\]
for any sparse family ${\mathcal S}\subset{\mathcal D}(Q)$ again, and apply Hedberg's method to this function.
Let $\alpha>0$.
For each $R_0\in{\mathcal D}(Q)$ and $x\in R_0$, we estimate
\begin{align*}
{\rm osc}_{\mathcal S}f(x)
&=
\left(
\sum_{R\subset R_0}
+
\sum_{R\supsetneq R_0}
\right)
\chi_R(x)
\fint_R|f(y)-f_R|\,{\rm d}y\\
&\lesssim
|R_0|^\alpha
{\rm OSC}_{{\mathcal S},\alpha}f(x)
+
\sum_{R\supsetneq R_0}
\ell_R
\fint_R|\nabla f(z)|\,{\rm d}z\\
&\lesssim
|R_0|^\alpha
{\rm OSC}_{{\mathcal S},\alpha}f(x)
+
|R_0|^{\frac1n-\frac1p}
\|\nabla f\|_{L^p(Q)},
\end{align*}
where in the second term, we have used Lemma \ref{lem:PWO}.
Since the cube $R_0$ is taken arbitrarily, we obtain
\[
{\rm osc}_{\mathcal S}f(x)
\lesssim
\|
\nabla f
\|_{L^p(Q)}^{1-\theta}
{\rm OSC}_{{\mathcal S},\alpha}
f(x)^\theta,
\]
where we set
\[
\theta
=
1-\frac\alpha{\alpha-\frac1n+\frac1p}.
\]
Since the condition
\[
\frac1q
=
\frac1p+\frac1s-\frac1n
\]
can be rewritten by
\[
\frac1{\theta q}
=
\frac1p+\frac1{\theta s}-\frac1n+\alpha,
\]
we can use Lemma \ref{lem:OSCa-sparse}, and obtain
\begin{align*}
\|g\cdot{\rm osc}_{\mathcal S}f\|_{L^q(Q)}
&\lesssim
\|\nabla f\|_{L^p(Q)}^{1-\theta}
\left\|
|g|^{\frac1\theta}
\cdot
{\rm OSC}_{{\mathcal S},\alpha}f
\right\|_{L^{\theta q}(Q)}^\theta\\
&\lesssim
\|\nabla f\|_{L^p(Q)}^{1-\theta}
\left[
\left\|
|g|^{\frac1\theta}
\right\|_{{\mathcal M}^{\theta s}_{\theta q,\theta\rho}(Q)}
\|\nabla f\|_{L^p(Q)}
\right]^\theta\\
&=
\|g\|_{{\mathcal M}^s_{q,\rho}(Q)}
\|\nabla f\|_{L^p(Q)},
\end{align*}
as desired.

\subsection{Proof of Theorem \ref{main:1weight-4}}

We assume that, for all $Q\in{\mathcal Q}({\mathbb R}^n)$, $f\in Lip(Q)$ and $g\in{\mathcal M}^n_p(Q)$, the inequality
\[
\|g(f-f_Q)\|_{L^p(Q)}
\lesssim
\|g\|_{{\mathcal M}^n_p(Q)}
\|\nabla f\|_{L^p(Q)}
\]
holds.
According to the argument in Section \ref{s:application} below, we obtain
\[
\|g\cdot f\|_{L^p}
\lesssim
\|g\|_{{\mathcal M}^n_p}
\|\nabla f\|_{L^p}.
\]
However, this inequality contradicts the counterexample given in \cite[Proposition 4.1]{SST11}.

\section{Comparison with known results}\label{s:comparison}

In this section, we compare the previous results.
It is well known that if $1\le p_1\le p_2\le\infty$, then $A_{p_1}\subset A_{p_2}$, and that $w\in A_p$ if and only if $w^{1-p'}\in A_{p'}$.
To compare the previous results, we use the following statements for the $A_p$-classes (see \cite[Chapter 8]{Grafakos24} for example).

\begin{proposition}\label{prop:Ap}
Let $1\le p<\infty$ and $w\in A_p$.
Then the following assertions hold.
\begin{itemize}
\item[{\rm (1)}] For all $Q\in{\mathcal Q}({\mathbb R}^n)$ and $E\subset Q$,
\[
\left(
\frac{|E|}{|Q|}
\right)^p
\le
[w]_{A_p}
\frac{w(E)}{w(Q)}.
\]

\item[{\rm (2)}] For all $K>1$, if
\[
1\le r\le
1+\frac{K-1}K
\frac1{c_n[w]_{A_p}}
\]
for some $c_n$ depending only on $n$, then, for all $Q\in{\mathcal Q}({\mathbb R}^n)$,
\[
\left(
\fint_Qw(x)^r\,{\rm d}x
\right)^{\frac1r}
\le K
\fint_Qw(x)\,{\rm d}x.
\]
\end{itemize}
\end{proposition}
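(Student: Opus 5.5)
Both parts are classical, and I would prove them essentially independently. Part (1) is a one-line H\"older argument. Part (2) is the sharp reverse H\"older inequality, which I would prove by the Hyt\"onen--P\'erez layer-cake/absorption argument applied to the local maximal operator $M_Q$ from Section \ref{s:preliminaries}. Throughout, only cubes in ${\mathcal D}(Q)$ enter, and $Q\in{\mathcal D}(Q)$ itself, so $[w]_{A_p(Q)}\le[w]_{A_p}$ controls everything.

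For (1), when $p>1$ I would write $|E|/|Q|=\fint_Q\chi_E w^{1/p}w^{-1/p}$ and apply H\"older with exponents $p,p'$:
\[
\frac{|E|}{|Q|}\le\left(\frac{w(E)}{|Q|}\right)^{\frac1p}\left(\fint_Q w^{-\frac{p'}{p}}\right)^{\frac1{p'}}.
\]
Raising this to the power $p$ and using $p'/p=1/(p-1)$ and $p/p'=p-1$ gives
\[
\left(\frac{|E|}{|Q|}\right)^p\le\frac{w(E)}{w(Q)}\cdot\frac{w(Q)}{|Q|}\left(\fint_Q w^{-\frac1{p-1}}\right)^{p-1}\le[w]_{A_p}\frac{w(E)}{w(Q)}.
\]
When $p=1$, I would use $|E|\le w(E)\|w^{-1}\|_{L^\infty(Q)}$ and $\frac{w(Q)}{|Q|}\|w^{-1}\|_{L^\infty(Q)}\le[w]_{A_1}$.

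For (2), set $\delta=r-1$. The steps, in order:
\begin{itemize}
\item[(a)] Show the Fujii--Wilson bound $\int_R M_R(w\chi_R)\le c_n[w]_{A_p}w(R)$ for every dyadic cube $R$. This is the step I expect to be the main obstacle. My first attempt would be the layer-cake formula for $M_R w$ combined with (1) applied to the Calder\'on--Zygmund cubes of $w$ at each level $\lambda$. If that only yields a constant depending on $[w]_{A_p}$ in a non-linear way, I would instead rely on the known inequality $[w]_{A_\infty}\lesssim[w]_{A_p}$ from \cite[Chapter 8]{Grafakos24}.
\item[(b)] Truncate: replace $M_Qw$ by $\min(M_Qw,N)$ so that every quantity below is finite and the absorption in step (d) is legitimate; let $N\to\infty$ at the end.
\item[(c)] Since $w\le M_Qw$ a.e.\ on $Q$, write $\int_Q w^{1+\delta}\le\int_Q (M_Qw)^\delta w$ and expand the right side by the layer-cake formula in $\lambda$. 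For $\lambda\le w_Q$, the contribution is at most $w_Q^\delta w(Q)$.
\item[(d)] For $\lambda>w_Q$, decompose $\{M_Qw>\lambda\}$ into maximal dyadic cubes $Q_j$, on which $M_Qw=M_{Q_j}w$. Step (a) on each $Q_j$, together with $w(Q_j)\le2^n\lambda|Q_j|$, bounds the tail by $\delta\,2^nc_n[w]_{A_p}\int_Q(M_Qw)^{\delta}w$, with the same truncation.
\item[(e)] If $\delta\,2^nc_n[w]_{A_p}\le(K-1)/K$, absorb this term into the left side to get $\int_Q(M_Qw)^\delta w\le K\,w_Q^\delta w(Q)$. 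Combined with (c), this gives $\fint_Q w^{r}\le K(w_Q)^{r}$.
\end{itemize}

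Taking the $r$-th root gives $(\fint_Q w^r)^{1/r}\le K^{1/r}w_Q\le K\,w_Q$, after adjusting $c_n$ to absorb $2^n$. The restriction $r\le1+\frac{K-1}{K}\frac1{c_n[w]_{A_p}}$ in the statement is exactly the absorption condition in step (e).
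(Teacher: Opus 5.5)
The paper gives no argument for this proposition. It is quoted from the textbook chapter cited just before it, so there is no internal proof to match.

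Your part (1) is the standard H\"older argument and is complete, including the case $p=1$.

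For part (2) you follow the Hyt\"onen--P\'erez proof of the sharp reverse H\"older inequality: a Fujii--Wilson bound on Calder\'on--Zygmund cubes, followed by absorption. The classical route instead turns (1) into the property ``$|S|\le\alpha|R|\Rightarrow w(S)\le\beta w(R)$'' and iterates Calder\'on--Zygmund decompositions. Your route fits the form of the statement more directly: a threshold linear in $1/[w]_{A_p}$ and an arbitrary $K>1$.

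Steps (b)--(e) are sound, but you should make explicit where the factor $\delta$ in (d) comes from.
After $w(\{M_Qw>\lambda\})\le2^n\lambda|\{M_Qw>\lambda\}|$ you need two facts:
\begin{itemize}
\item[(i)] $\delta\int_0^\infty\lambda^\delta|\{M_Qw>\lambda\}|\,{\rm d}\lambda=\frac{\delta}{1+\delta}\int_Q(M_Qw)^{1+\delta}$;
\item[(ii)] $\int_Q(M_Qw)^{1+\delta}=\int_0^\infty\delta\lambda^{\delta-1}\int_{\{M_Qw>\lambda\}}M_Qw\,{\rm d}\lambda\le c[w]_{A_p}\int_Q(M_Qw)^\delta w$.
\end{itemize}
Step (a) is used in (ii), on the cubes $Q_j$ and on $Q$ itself for $\lambda\le w_Q$. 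A level-by-level comparison of $\lambda|\{M_Qw>\lambda\}|$ with $w(\{M_Qw>\lambda\})$ would only return the tail times $2^nc[w]_{A_p}\ge1$, which is circular. The truncation in (b) goes through in the same computation.

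\textbf{The weak point is step (a), and your criterion for abandoning the first attempt targets the wrong issue.}
The route through (1) does give a bound linear in $[w]_{A_p}$. Take levels $a^kw_R$ with $a=2^n/\alpha$. Each Calder\'on--Zygmund cube of level $k$ has at most an $\alpha$-fraction of its measure in level $k+1$. Applying (1) to the complement gives $w$-decay by the factor $1-(1-\alpha)^p/[w]_{A_p}$, whence
$\int_RM_Rw\le\bigl(1+\log(2^n/\alpha)(1-\alpha)^{-p}[w]_{A_p}\bigr)w(R)$.
However, the factor $(1-\alpha)^{-p}$, or $\log p$ after optimizing $\alpha$, makes the constant depend on $p$. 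So this route does not give the $c_n$ depending only on $n$ that the statement asserts.

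Here is a short proof of (a) with an absolute constant.
\begin{itemize}
\item[(i)] By Jensen, $(\fint_Rw^{-\frac1{p-1}})^{p-1}\ge\exp(-\fint_R\log w)$. For $p=1$ use instead $\|w^{-1}\|_{L^\infty(R)}^{-1}\le\exp(\fint_R\log w)$.
\item[(ii)] Hence, for all $R\in{\mathcal D}(Q)$ and $0<t<1$, $w_R\le[w]_{A_p}\exp(\fint_R\log w)\le[w]_{A_p}(\fint_Rw^t)^{1/t}$, so $M_Qw\le[w]_{A_p}(M_Q[w^t])^{1/t}$.
\item[(iii)] Doob's inequality $\|M_Qg\|_{L^s(Q)}\le s'\|g\|_{L^s(Q)}$ with $s=1/t$ gives $\int_QM_Qw\le(1-t)^{-1/t}[w]_{A_p}w(Q)$.
\item[(iv)] Letting $t\to0$ yields $\int_QM_Qw\le e[w]_{A_p}w(Q)$.
\end{itemize}
With this in place, your argument proves (2) with $c_n=2^ne$.
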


By Proposition \ref{prop:Ap} (2), if $w\in A_p$ and $v\in A_\infty$, then, for $r>1$ sufficiently close to $1$,
\[
\left[w^{\frac1p},v^{\frac1q}\right]_{{\mathcal A}^{r,r}_{p,q}(Q)}
\le
2^{\frac1{p'}+\frac1q}
\left[w^{\frac1p},v^{\frac1q}\right]_{{\mathcal A}^{1,1}_{p,q}(Q)}.
\]
Thus, we restrict our attention to estimating the seminorm $[w^{1/p},v^{1/q}]_{{\mathcal A}^{1,1}_{p,q}(Q)}$, and compare it with Theorem \ref{main:2weight-1}.
To the best of our knowledge, there are no results for the case $p>q$ corresponding to Theorem \ref{main:2weight-2} for the weighted Poincar\'e--Sobolev inequality.
Therefore, we omit further disscusion of this case.
Since
\[
\left[w^{\frac1p},w^{\frac1p}\right]_{{\mathcal A}^{1,1}_{p,p}(Q)}
\le
[w]_{A_p}^{\frac1p}
\ell_Q,
\]
we obtain the result in \cite{FKS82} under the assumption $w\in A_p$.

If $w\in A_p$ and
\[
\frac{\ell_R}{\ell_Q}
\left(
\frac{v(R)}{v(Q)}
\right)^{\frac1q}
\lesssim
\left(
\frac{w(R)}{w(Q)}
\right)^{\frac1p}
\]
for any $R\in{\mathcal D}(Q)$, then
\[
\left[w^{\frac1p},v^{\frac1q}\right]_{{\mathcal A}^{1,1}_{p,q}(Q)}
\lesssim
[w]_{A_p}^{\frac1p}
\ell_Q
\frac{v(Q)^{\frac1q}}{w(Q)^{\frac1p}}.
\]
Therefore, assuming $v\in A_\infty({\mathbb R}^n)$, we establish Theorem 1.3 in \cite{ChWh85}.

Define the Sobolev conjugate number $p_w^\ast$ with respect to a weight $w\in A_r$ for $1\le r\le p$ by
\[
\frac1{p_w^\ast}
=
\frac1p-\frac1{nr}.
\]
Then, since 
\begin{align*}
\left[
w^{\frac1p},w^{\frac1{p_w^\ast}}
\right]_{{\mathcal A}^{1,1}_{p,p_w^\ast}(Q)}
\lesssim
[w]_{A_r}^{\frac1{nr}}
[w]_{A_p}^{\frac1p}
\frac{\ell_Q}{w(Q)^{\frac1p-\frac1{p_w^\ast}}},
\end{align*}
we obtain the result in \cite[Corollary 1.15]{PeRe19} for $1<p<n$.

\section{Applications}\label{s:application}

As applications, we present the 2-weighted generalizations for Sobolev embedding theorem and the Gagliardo--Nirenberg interpolation inequailties.

Here and below, we let $\tilde{r}$ such that $\tilde{r}>1$ when we used Theorem \ref{main:2weight-1} and Theorem \ref{main:2weight-2} (1), and $\tilde{r}=1$ when we used Theorem \ref{main:2weight-2} (2).
Fix $f\in Lip_{\rm c}({\mathbb R}^n)$ and $Q\in{\mathcal Q}({\mathbb R}^n)$.
By Theorems \ref{main:2weight-1} and \ref{main:2weight-2},
\begin{align*}
\|v\cdot f\|_{L^q(Q)}
&\le
\|v(f-f_Q)\|_{L^q(Q)}
+
\|v\|_{L^q(Q)}
|f_Q|\\
&\lesssim
[w,v]_{{\mathcal A}^{r,\tilde{r}}_{p,q,\rho}(Q)}
\left(
\bigl\|w|\nabla f|\bigr\|_{L^p(Q)}
+
\frac1{\ell_Q}
\|w\cdot f\|_{L^p(Q)}
\right).
\end{align*}

\subsection{Global extension with 2-weights}

Taking a limit as $\ell_Q\to\infty$ keeping center of $Q$, we have the following weighted Sobolev embedding theorem of the homogeneous-type.

\begin{corollary}\label{cor:Sobolev-hom}
Assume that the parameters $p,q,r,\rho$ satisfy the same assumption in Theorems {\rm \ref{main:2weight-1}} and {\rm \ref{main:2weight-2}}.
If $(w,v)\in{\mathcal A}^{r,\tilde{r}}_{p,q,\rho}({\mathbb R}^n)$, then, for all $f\in Lip_{\rm c}({\mathbb R}^n)$,
\[
\|v\cdot f\|_{L^q({\mathbb R}^n)}
\lesssim
[w,v]_{{\mathcal A}^{r,\tilde{r}}_{p,q,\rho}({\mathbb R}^n)}
\bigl\|w|\nabla f|\bigr\|_{L^p({\mathbb R}^n)}.
\]
\end{corollary}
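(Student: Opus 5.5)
We start from the local estimate displayed just before the corollary, which combines Theorems \ref{main:2weight-1}/\ref{main:2weight-2} with the splitting $f=(f-f_Q)+f_Q$:
\[
\|v\cdot f\|_{L^q(Q)}
\le
\|v(f-f_Q)\|_{L^q(Q)}
+
\|v\|_{L^q(Q)}|f_Q|.
\]
We apply it to dyadic cubes $Q=Q_k\in{\mathcal D}({\mathbb R}^n)$ with $\ell_{Q_k}=2^k$, $k\to\infty$, all containing a fixed point (or, if needed, to the $2^n$ dyadic cubes of side $2^k$ adjacent to the origin, so that their union contains any given compact set). Since $f\in Lip_{\rm c}({\mathbb R}^n)$, for large $k$ the support of $f$ lies in $Q_k$, so $\|v\cdot f\|_{L^q(Q_k)}=\|v\cdot f\|_{L^q({\mathbb R}^n)}$ and $\|w|\nabla f|\|_{L^p(Q_k)}=\|w|\nabla f|\|_{L^p({\mathbb R}^n)}$. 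Moreover ${\mathcal D}(Q_k)\subset{\mathcal D}({\mathbb R}^n)$, so
\[
[w,v]_{{\mathcal A}^{r,\tilde r}_{p,q,\rho}(Q_k)}\le[w,v]_{{\mathcal A}^{r,\tilde r}_{p,q,\rho}({\mathbb R}^n)}.
\]
The first term is therefore bounded by $[w,v]_{{\mathcal A}^{r,\tilde r}_{p,q,\rho}({\mathbb R}^n)}\|w|\nabla f|\|_{L^p({\mathbb R}^n)}$, uniformly in $k$.

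It remains to show that the error term $\|v\|_{L^q(Q_k)}|f_{Q_k}|$ tends to zero; this is the main step. Write $|f_{Q_k}|\le|Q_k|^{-1}\|f\|_{L^1}$. By Hölder's inequality with $q\tilde r\ge q$ (or with $q\tilde r=q$ when $\tilde r=1$),
\[
\|v\|_{L^q(Q_k)}\le|Q_k|^{1/q}\left(\fint_{Q_k}v^{q\tilde r}\right)^{1/(q\tilde r)}.
\]
We then use the single-cube term of the ${\mathcal A}$-seminorm at $R=Q_k$:
\[
\left(\fint_{Q_k}v^{q\tilde r}\right)^{1/(q\tilde r)}
\le
[w,v]\,\frac{|Q_k|^{1/p-1/q}}{\ell_{Q_k}}\left(\fint_{Q_k}w^{-p'r}\right)^{-1/(p'r)}.
\]
To bound the last factor, fix a cube $Q_0\subset Q_k$ containing $\operatorname{supp} f$, where $Q_0$ does not depend on $k$. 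Since $p'r\ge p'$, Hölder's inequality gives $\fint_{Q_k}w^{-p'r}\ge(\fint_{Q_k}w^{-p'})^{r}$. Also $\int_{Q_k}w^{-p'}\ge\int_{Q_0}w^{-p'}=:c_0>0$, which is positive because $w^{-1}>0$ a.e. Hence
\[
\left(\fint_{Q_k}w^{-p'r}\right)^{-1/(p'r)}\le\left(\fint_{Q_k}w^{-p'}\right)^{-1/p'}\le c_0^{-1/p'}|Q_k|^{1/p'}.
\]

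Collecting the powers of $|Q_k|$ in the error term:
\[
|Q_k|^{1/q}\cdot|Q_k|^{1/p-1/q}\ell_{Q_k}^{-1}\cdot|Q_k|^{1/p'}\cdot|Q_k|^{-1}=\ell_{Q_k}^{-1}\to0 .
\]
So the error is at most $C_f\,[w,v]\,2^{-k}\to0$, where $C_f$ depends on $f$ and $Q_0$ but not on $k$. Letting $k\to\infty$ gives the corollary.

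The expected obstacle is exactly this lower bound on $\fint w^{-p'r}$ uniformly in $k$. The point is that averaging over the growing cube is compensated exactly by the factor $|Q_k|^{1/p'}$, and the exponent bookkeeping above shows that the net decay $\ell_{Q_k}^{-1}$ survives. When $\rho<\infty$, the single-term bound still holds because each summand is dominated by the $\ell^\rho$ sum.
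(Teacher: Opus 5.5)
Your proof is correct. It follows the paper's overall scheme: split $f=(f-f_Q)+f_Q$, apply the local theorem on large cubes, and show the average term vanishes as $\ell_Q\to\infty$. Where you differ is in how you dispose of the error term.

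The paper applies H\"older's inequality with the weight, $|f_Q|\le(\fint_Qw^{-p'})^{1/p'}|Q|^{-1/p}\|w\cdot f\|_{L^p(Q)}$. Combined with the $R=Q$ term of the seminorm, this gives the local bound $\|v\|_{L^q(Q)}|f_Q|\le\ell_Q^{-1}[w,v]_{{\mathcal A}^{r,\tilde r}_{p,q,\rho}(Q)}\|w\cdot f\|_{L^p(Q)}$. That one inequality yields both the homogeneous corollary (let $\ell_Q\to\infty$) and the inhomogeneous one (unit cubes plus the amalgam norm).

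You instead use $|f_Q|\le|Q|^{-1}\|f\|_{L^1}$ together with the lower bound $\int_{Q_k}w^{-p'}\ge c_0>0$ from a fixed subcube. Your constant depends on $f$ and $w$, which is harmless because it multiplies $\ell_{Q_k}^{-1}\to0$. In exchange you do not need $\|w\cdot f\|_{L^p({\mathbb R}^n)}<\infty$. The paper's passage to the limit tacitly uses this, and it can fail for weights that are only locally integrable: take $w$ not $p$-integrable near a point where $f$ is locally a nonzero constant. Your choice of dyadic $Q_k$ is also more careful than the paper's ``keeping the center of $Q$''. The comparison $[w,v]_{{\mathcal A}^{r,\tilde r}_{p,q,\rho}(Q)}\le[w,v]_{{\mathcal A}^{r,\tilde r}_{p,q,\rho}({\mathbb R}^n)}$ needs ${\mathcal D}(Q)\subset{\mathcal D}({\mathbb R}^n)$.

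One point to tighten. No standard dyadic cube crosses a coordinate hyperplane, so a single nested chain $Q_k$ exhausts only one of the $2^n$ orthants. Your parenthetical $2^n$-cube version is therefore needed in general, not optional. Run the argument separately on each quadrant cube $Q_k^{(i)}$ and add the $2^n$ bounds for $\|v\cdot f\|_{L^q(Q_k^{(i)})}^q$. This costs only a constant depending on $n$ and $q$; for $q<1$ the initial splitting also needs the quasi-triangle inequality. In each $Q_k^{(i)}$, take $Q_0$ to be any fixed cube contained in all of them, for instance $Q_{k_0}^{(i)}$. It need not contain $\operatorname{supp}f$; all that matters is $\int_{Q_0}w^{-p'}>0$.
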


Moreover, since the norm $\|\cdot\|_{L^p({\mathbb R}^n)}$ can be expressed by amlgam norm as
\[
\|f\|_{L^p({\mathbb R}^n)}
=
\left(
\sum_{m\in{\mathbb Z}^n}
\|f\|_{L^p(m+[0,1)^n)}^p
\right)^{\frac1p},
\]
we also have the inhomogeneous-type 2-weighted Sobolev embedding theorem.

\begin{corollary}\label{cor:Sobolev-inhom}
Assume that the parameters $p,q,r,\rho$ satisfy the same assumption in Theorems {\rm \ref{main:2weight-1}} and {\rm \ref{main:2weight-2}}.
If $(w,v)\in{\mathcal A}^{r,\tilde{r},{\rm loc}}_{p,q,\rho}({\mathbb R}^n)$, then, for all $f\in Lip_{\rm c}({\mathbb R}^n)$,
\[
\|v\cdot f\|_{L^q({\mathbb R}^n)}
\lesssim
[w,v]_{{\mathcal A}^{r,\tilde{r},{\rm loc}}_{p,q,\rho}({\mathbb R}^n)}
\left(
\bigl\|w|\nabla f|\bigr\|_{L^p({\mathbb R}^n)}
+
\|w\cdot f\|_{L^p({\mathbb R}^n)}
\right).
\]
\end{corollary}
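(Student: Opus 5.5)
The plan is to cover $\mathbb R^n$ by the unit lattice cubes $Q_m=m+[0,1)^n$, $m\in{\mathbb Z}^n$. On each $Q_m$ we apply the local estimate displayed at the beginning of this section, and then sum in $m$ using the amalgam expression of the $L^p$-norm recalled just before the statement. Fix $f\in Lip_{\rm c}({\mathbb R}^n)$; only finitely many $Q_m$ meet ${\rm supp}\,f$, so all sums are finite.

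First, each $Q_m$ is a dyadic cube with $|Q_m|=1$. Hence ${\mathcal D}(Q_m)\subset\{R\in{\mathcal D}:|R|\le1\}$, and
\[
[w,v]_{{\mathcal A}^{r,\tilde{r}}_{p,q,\rho}(Q_m)}
\le
[w,v]_{{\mathcal A}^{r,\tilde{r},{\rm loc}}_{p,q,\rho}({\mathbb R}^n)}.
\]
When $\rho=\infty$ this follows from the supremum over a smaller family. When $\rho<\infty$ it follows from a partial sum of the $\ell^\rho$-sum. In fact, for $\rho<\infty$ the sets ${\mathcal D}(Q_m)$ are pairwise disjoint and their union is exactly the family of dyadic cubes with $|R|\le1$. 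This gives the stronger relation
\[
\sum_m[w,v]_{{\mathcal A}^{r,\tilde r}_{p,q,\rho}(Q_m)}^\rho=[w,v]_{{\mathcal A}^{r,\tilde r,{\rm loc}}_{p,q,\rho}}^\rho,
\]
which we may need below.

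Next, apply the local estimate with $\ell_{Q_m}=1$:
\[
\|vf\|_{L^q(Q_m)}\lesssim[w,v]_{{\mathcal A}(Q_m)}\bigl(\|w|\nabla f|\|_{L^p(Q_m)}+\|wf\|_{L^p(Q_m)}\bigr)=:a_m b_m .
\]
The term $|f_{Q_m}|\,\|v\|_{L^q(Q_m)}$ in that estimate is controlled as follows. Write $|f_{Q_m}|\le\|wf\|_{L^p(Q_m)}\bigl(\fint_{Q_m} w^{-p'}\bigr)^{1/p'}$ by H\"older's inequality. Then bound $\bigl(\fint w^{-p'}\bigr)^{1/p'}\bigl(\fint v^{q}\bigr)^{1/q}$ by the $R=Q_m$ term of the seminorm, using H\"older once more to pass to the exponents $p'r$ and $q\tilde r$.

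Then sum over $m$.

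When $p\le q$ (Theorems \ref{main:2weight-1} and \ref{main:2weight-2} with $\rho=\infty$), use $\sup_m a_m\le[w,v]_{\rm loc}$ and the embedding $\ell^p\hookrightarrow\ell^q$:
\[
\|vf\|_{L^q}=\Bigl(\sum_m\|vf\|^q_{L^q(Q_m)}\Bigr)^{1/q}\lesssim[w,v]_{\rm loc}\Bigl(\sum_m b_m^p\Bigr)^{1/p}.
\]
Since $b_m^p\lesssim\|w|\nabla f|\|_{L^p(Q_m)}^p+\|wf\|_{L^p(Q_m)}^p$, the amalgam identity gives the right-hand side of the corollary.

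When $p>q$ with $1/q=1/p+1/\rho$, use H\"older's inequality for sequences:
\[
\Bigl(\sum_m(a_mb_m)^q\Bigr)^{1/q}\le\|a\|_{\ell^\rho}\|b\|_{\ell^p}.
\]
The disjointness identity above gives $\|a\|_{\ell^\rho}=[w,v]_{{\mathcal A}^{r,\tilde r,{\rm loc}}_{p,q,\rho}}$. This covers both $q>1$ and $q\le1$ (with $\tilde r=1$ in the latter case), since the local estimates already hold as stated.

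The step I expect to be the main obstacle is verifying the control of $|f_{Q_m}|\,\|v\|_{L^q(Q_m)}$ by the seminorm. This is where the exponents $r,\tilde r\ge1$ and the normalization $\ell_Q/|Q|^{1/p-1/q}=1$ for unit cubes enter. The rest of the argument is bookkeeping.
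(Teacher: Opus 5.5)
Your proof is correct and follows the paper's own route: apply the local estimate $\|vf\|_{L^q(Q)}\lesssim[w,v]_{{\mathcal A}^{r,\tilde r}_{p,q,\rho}(Q)}\bigl(\|w|\nabla f|\|_{L^p(Q)}+\ell_Q^{-1}\|wf\|_{L^p(Q)}\bigr)$ on the unit lattice cubes $Q_m=m+[0,1)^n$, then sum using the amalgam expression of the $L^p$-norm. You also spell out the summation step that the paper leaves implicit, namely $\ell^p\hookrightarrow\ell^q$ when $p\le q$, and, when $p>q$, H\"older in $m$ combined with the partition $\{R\in{\mathcal D}:|R|\le1\}=\bigcup_m{\mathcal D}(Q_m)$ to get $\|a\|_{\ell^\rho}=[w,v]_{{\mathcal A}^{r,\tilde r,{\rm loc}}_{p,q,\rho}}$; the remaining issues are cosmetic (the $p\le q$ case is Theorem \ref{main:2weight-1} alone, and for $q<1$ the triangle inequality only holds up to the constant $2^{1/q-1}$).
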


Additionally, using the interpolation inequality, we have
\begin{align*}
&\|v(f-f_Q)\|_{L^q(Q)}
\le
\|v(f-f_Q)\|_{L^{p_0}(Q)}^{1-\theta}
\|v(f-f_Q)\|_{L^{q_1}(Q)}^\theta\\
&\quad\lesssim
\left[
\|v\cdot f\|_{L^{p_0}(Q)}
+
\|v\|_{L^{p_0}(Q)}
|f_Q|
\right]^{1-\theta}
\left(
[w,v]_{{\mathcal A}_{p_1,q_1,\rho}^{r,\tilde{r}}(Q)}
\bigl\|w|\nabla f|\bigr\|_{L^{p_1}(Q)}
\right)^\theta\\
&\quad\le
\left[
(1+[v^{p_0}]_{A_{p_0}(Q)})
\|v\cdot f\|_{L^{p_0}(Q)}
\right]^{1-\theta}
\left(
[w,v]_{{\mathcal A}_{p_1,q_1,\rho}^{r,\tilde{r}}(Q)}
\bigl\|w|\nabla f|\bigr\|_{L^{p_1}(Q)}
\right)^\theta.
\end{align*}

Thus, we obtain the following homogeneous and inhomogeneous-type weighted Ggaliardo--Nirenberg interpolation inequalities.

\begin{corollary}\label{cor:wGN}
Let $1\le p_0<\infty$ and $0<q,q_1<\infty$ satisfy
\[
\frac1q=\frac{1-\theta}{p_0}+\frac\theta{q_1}.
\]
Assume that the parameters $p_1,q_1,r,\rho$ satisfy the same assumption in Theorems {\rm \ref{main:2weight-1}} and {\rm \ref{main:2weight-2}}.
Then, for all $f\in Lip_{\rm c}({\mathbb R}^n)$, the following assertions hold.
\begin{itemize}
\item[{\rm (1)}] If $(w,v)\in{\mathcal A}^{r,\tilde{r}}_{p,q,\rho}({\mathbb R}^n)$ and $v^{p_0}\in A_{p_0}({\mathbb R}^n)$, then,
\[
\|v\cdot f\|_{L^q({\mathbb R}^n)}
\lesssim
(1+[v^{p_0}]_{A_{p_0}({\mathbb R}^n)})^{1-\theta}
[w,v]_{{\mathcal A}_{p_1,q_1,\rho}^{r,\tilde{r}}({\mathbb R}^n)}^\theta
\|v\cdot f\|_{L^{p_0}}^{1-\theta}
\bigl\|w|\nabla f|\bigr\|_{L^{p_1}({\mathbb R}^n)}^\theta.
\]

\item[{\rm (2)}] If $(w,v)\in{\mathcal A}^{r,\tilde{r},{\rm loc}}_{p,q,\rho}({\mathbb R}^n)$ and $v^{p_0}\in A^{\rm loc}_{p_0}({\mathbb R}^n)$, then, for all $f\in Lip_{\rm c}({\mathbb R}^n)$,
\begin{align*}
\|v\cdot f\|_{L^q({\mathbb R}^n)}
\lesssim
(1+[v^{p_0}]_{A^{\rm loc}_{p_0}({\mathbb R}^n)})^{1-\theta}
&[w,v]_{{\mathcal A}_{p_1,q_1,\rho}^{r,\tilde{r},{\rm loc}}({\mathbb R}^n)}^\theta
\|v\cdot f\|_{L^{p_0}({\mathbb R}^n)}^{1-\theta}\\
&\times
\left(
\bigl\|w|\nabla f|\bigr\|_{L^{p_1}({\mathbb R}^n)}
+
\|w\cdot f\|_{L^{p_1}({\mathbb R}^n)}
\right)^\theta.
\end{align*}
\end{itemize}
\end{corollary}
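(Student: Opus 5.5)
The plan is to derive both parts from the local two-weight Poincar\'e--Sobolev inequalities, used cube by cube, followed by a limiting or summation argument. Fix $f\in Lip_{\rm c}({\mathbb R}^n)$. The first step is the interpolation chain displayed just before the statement, applied on a single cube $Q$. By H\"older's inequality with $1/q=(1-\theta)/p_0+\theta/q_1$,
\[
\|v(f-f_Q)\|_{L^q(Q)}\le\|v(f-f_Q)\|_{L^{p_0}(Q)}^{1-\theta}\|v(f-f_Q)\|_{L^{q_1}(Q)}^{\theta}.
\]
The second factor is controlled by Theorem \ref{main:2weight-1} or Theorem \ref{main:2weight-2}, applied with exponents $(p_1,q_1)$, giving $[w,v]_{{\mathcal A}^{r,\tilde r}_{p_1,q_1,\rho}(Q)}\|w|\nabla f|\|_{L^{p_1}(Q)}$. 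For the first factor, we write $\|v(f-f_Q)\|_{L^{p_0}(Q)}\le\|vf\|_{L^{p_0}(Q)}+\|v\|_{L^{p_0}(Q)}|f_Q|$. To bound $\|v\|_{L^{p_0}(Q)}|f_Q|$, we use H\"older's inequality:
\[
|f_Q|\le |Q|^{-1}\|vf\|_{L^{p_0}(Q)}\|v^{-1}\|_{L^{p_0'}(Q)}.
\]
The product $|Q|^{-1}\|v\|_{L^{p_0}(Q)}\|v^{-1}\|_{L^{p_0'}(Q)}$ is exactly $[v^{p_0}]_{A_{p_0}(Q)}^{1/p_0}$, which is at most $1+[v^{p_0}]_{A_{p_0}(Q)}$. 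When $p_0=1$ we replace $\|v^{-1}\|_{L^{p_0'}}$ by the $L^\infty$ norm, consistent with the $A_1$ definition. This yields a local Gagliardo--Nirenberg inequality for $f-f_Q$ on every cube, with constants involving only $[v^{p_0}]_{A_{p_0}(Q)}$ and $[w,v]_{{\mathcal A}(Q)}$.

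For part (1), we take $Q$ centered at the origin and let $\ell_Q\to\infty$. The local seminorms over ${\mathcal D}(Q)$ are dominated by the global ones. Strictly speaking, ${\mathcal D}(Q)$ is not a subfamily of ${\mathcal D}$ for arbitrary $Q$; we therefore choose $Q$ to be unions of dyadic cubes or dyadic cubes themselves, for example $Q=[0,2^k)^n$ in each of the $2^n$ orthants, or we absorb the shift via the standard $3^n$-lattice trick. It remains to show $\|v(f-f_Q)\|_{L^q(Q)}\to\|vf\|_{L^q}$. Since $f$ has compact support, $|f_Q|\le\|f\|_{L^1}/|Q|$. The error term is then $\|v\|_{L^q(Q)}|f_Q|\lesssim\|v\|_{L^q(Q)}|Q|^{-1}\|f\|_{L^1}$. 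I expect this to be the main obstacle: we must show this tends to zero, or at least is controlled, using $v^{p_0}\in A_{p_0}$, which forces polynomial growth of $v(Q)$, together with the ${\mathcal A}$-condition. Evaluating the ${\mathcal A}$-seminorm on the large cube $Q$ gives $\|v\|_{L^{q_1}(Q)}\lesssim |Q|^{1/p_1-1/n}\|w^{-1}\|_{L^{p_1'}(Q)}^{-1}$. If the error term does not vanish outright, we can instead use that $f_Q$ is small relative to the cube by applying the same H\"older argument on the bigger cube. Alternatively, we can argue directly: on $Q\supset{\rm supp}f$ with $\ell_Q$ large, the left side $\|v(f-f_Q)\|_{L^q(Q)}$ is bounded below by $\|vf\|_{L^q({\rm supp} f)}-|f_Q|\|v\|_{L^q({\rm supp}f)}$, and the subtracted term tends to zero trivially because $|f_Q|\to0$ and ${\rm supp}f$ is fixed. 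This avoids growth issues entirely, and I would use this version. Meanwhile, $\|vf\|_{L^{p_0}(Q)}\to\|vf\|_{L^{p_0}}$ and $\|w|\nabla f|\|_{L^{p_1}(Q)}\to\|w|\nabla f|\|_{L^{p_1}}$. Letting $Q$ exhaust ${\mathbb R}^n$ and taking the supremum over the fixed support then gives (1).

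For part (2), we tile ${\mathbb R}^n$ by unit cubes $Q_m=m+[0,1)^n$, so that the local seminorms are dominated by the ${\rm loc}$ ones and $\ell_{Q_m}=1$. On each $Q_m$ we write $\|vf\|_{L^q(Q_m)}\le\|v(f-f_{Q_m})\|_{L^q(Q_m)}+\|v\|_{L^q(Q_m)}|f_{Q_m}|$. The first term is treated by the interpolation chain above. For the second, we interpolate $\|v\|_{L^q(Q_m)}\le\|v\|_{L^{p_0}(Q_m)}^{1-\theta}\|v\|_{L^{q_1}(Q_m)}^\theta$. The factor $\|v\|_{L^{q_1}(Q_m)}|f_{Q_m}|$ is bounded, via the unit-cube ${\mathcal A}$-condition and H\"older's inequality, by $[w,v]\,\|wf\|_{L^{p_1}(Q_m)}$; this is exactly the estimate preceding Corollary \ref{cor:Sobolev-hom} with $\ell_Q=1$. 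The factor $\|v\|_{L^{p_0}(Q_m)}|f_{Q_m}|$ is bounded by $[v^{p_0}]_{A^{\rm loc}_{p_0}}^{1/p_0}\|vf\|_{L^{p_0}(Q_m)}$. This gives on each $Q_m$ the bound
\[
\|vf\|_{L^q(Q_m)}\lesssim a_m^{1-\theta}b_m^\theta,
\]
where $a_m=(1+[v^{p_0}]_{A^{\rm loc}_{p_0}})\|vf\|_{L^{p_0}(Q_m)}$ and $b_m=[w,v]_{\rm loc}\bigl(\|w|\nabla f|\|_{L^{p_1}(Q_m)}+\|wf\|_{L^{p_1}(Q_m)}\bigr)$. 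Finally, we sum the $q$-th powers over $m$. Hölder's inequality for sequences with exponents $p_0/((1-\theta)q)$ and $p_1/(\theta q)$ would require $1/q\ge(1-\theta)/p_0+\theta/p_1$; this holds since $p_1\le q_1$ in the Theorem \ref{main:2weight-1} case, and it then uses $\ell^{s}\hookrightarrow\ell^{t}$ for $s\le t$ together with the amalgam representation of $L^p$. In the $p_1>q_1$ case, the constraint linking $\rho$, $p_1$ and $q_1$ must compensate, and I would check the exponent arithmetic there carefully. This summation is the second delicate point.
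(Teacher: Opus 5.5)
Your route is the paper's: the local interpolation chain on a cube, followed by exhaustion for (1) and unit-cube tiling for (2). The local chain consists of H\"older in the exponent, Theorem \ref{main:2weight-1} or \ref{main:2weight-2} with $(p_1,q_1)$ for the $L^{q_1}$ factor, and $\|v\|_{L^{p_0}(Q)}|f_Q|\le[v^{p_0}]_{A_{p_0}(Q)}^{1/p_0}\|vf\|_{L^{p_0}(Q)}$ for the $L^{p_0}$ factor. Part (1) is sound in the form you settle on. You work on the $2^n$ dyadic orthant cubes and restrict the left side to ${\rm supp}\,f$, where $v\in L^q_{\rm loc}$ because $q$ lies between $p_0$ and $q_1$. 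This is more careful than the paper's limit ``keeping the center'', in which $Q\notin{\mathcal D}$. Part (2) is also fine when $p_1\le q_1$.

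The gap is the summation in (2) in the regime $p_1>q_1$ of Theorem \ref{main:2weight-2}. You leave it open, and it fails as you have set it up. Once the supremum $[w,v]_{\rm loc}$ is built into $b_m$, all you know is $\{b_m\}\in\ell^{p_1}$. The bound $\sum_m a_m^{(1-\theta)q}b_m^{\theta q}\lesssim\|\{a_m\}\|_{\ell^{p_0}}^{(1-\theta)q}\|\{b_m\}\|_{\ell^{p_1}}^{\theta q}$ is false when $1/q<(1-\theta)/p_0+\theta/p_1$. Indeed, for $a_m=N^{-1/p_0}$ and $b_m=N^{-1/p_1}$ on $N$ cubes, the left side is $N^{1-q((1-\theta)/p_0+\theta/p_1)}\to\infty$. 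No exponent bookkeeping repairs this.

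The missing idea is to keep the cube-dependent constants $c_m=[w,v]_{{\mathcal A}^{r,\tilde r}_{p_1,q_1,\rho}(Q_m)}$. These control both $\|v(f-f_{Q_m})\|_{L^{q_1}(Q_m)}$ and $\|v\|_{L^{q_1}(Q_m)}|f_{Q_m}|\le c_m\|wf\|_{L^{p_1}(Q_m)}$. Since $\{Q\in{\mathcal D}:|Q|\le1\}$ is the disjoint union of the families ${\mathcal D}(Q_m)$, one has $\|\{c_m\}\|_{\ell^\rho}=[w,v]_{{\mathcal A}^{r,\tilde r,{\rm loc}}_{p_1,q_1,\rho}}$. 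Set $B_m=\bigl\|w|\nabla f|\bigr\|_{L^{p_1}(Q_m)}+\|wf\|_{L^{p_1}(Q_m)}$. H\"older with $1/q_1=1/p_1+1/\rho$ gives $\|\{c_mB_m\}\|_{\ell^{q_1}}\le[w,v]_{\rm loc}\|\{B_m\}\|_{\ell^{p_1}}$; for $\rho=\infty$, $p_1\le q_1$, use $\ell^{p_1}\hookrightarrow\ell^{q_1}$ instead. Then H\"older with exponents $p_0/((1-\theta)q)$ and $q_1/(\theta q)$, whose reciprocals sum exactly to $1$, yields
\[
\sum_m\|vf\|_{L^q(Q_m)}^q\lesssim\|\{a_m\}\|_{\ell^{p_0}}^{(1-\theta)q}\,\|\{c_mB_m\}\|_{\ell^{q_1}}^{\theta q}.
\]
This closes (2) uniformly in both regimes. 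It is exactly where the $\ell^\rho$-summability built into ${\mathcal A}_{p_1,q_1,\rho}$ is needed; the paper passes over this step with ``Thus''.
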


Moreover, since
\[
\lim_{j\to\infty}
\bigl([-2^j,0)^{n-1}\times[0,2^j)\bigr)\cup[0,2^j)^n
=
\overline{{\mathbb R}_+^n},
\]
\[
\bigcup_{m\in{\mathbb Z}^{n-1}\times({\mathbb N}\cup\{0\})}
\bigl(m+[0,1)^n\bigr)
=
\overline{{\mathbb R}_+^n},
\]
we can reduce the upper half space ${\mathbb R}_+^n$.
Then we have the following corollaries.

\begin{corollary}
Assume that the parameters $p,q,r,\rho$ satisfy the same assumption in Theorems {\rm \ref{main:2weight-1}} and {\rm \ref{main:2weight-2}}.
Then the following assertions hold{\rm :}
\begin{itemize}
\item[{\rm (1)}] If $(w,v)\in{\mathcal A}^{r,\tilde{r},{\rm loc}}_{p,q,\rho}({\mathbb R}_+^n)$, then, for all $f\in Lip_{\rm c}({\mathbb R}_+^n)$,
\[
\|v\cdot f\|_{L^q({\mathbb R}_+^n)}
\lesssim
[w,v]_{{\mathcal A}^{r,\tilde{r}}_{p,q,\rho}({\mathbb R}_+^n)}
\bigl\|w|\nabla f|\bigr\|_{L^p({\mathbb R}_+^n)}.
\]

\item[{\rm (2)}] If $(w,v)\in{\mathcal A}^{r,\tilde{r},{\rm loc}}_{p,q,\rho}({\mathbb R}_+^n)$, then, for all $f\in Lip_{\rm c}({\mathbb R}_+^n)$,
\[
\|v\cdot f\|_{L^q({\mathbb R}_+^n)}
\lesssim
[w,v]_{{\mathcal A}^{r,\tilde{r},{\rm loc}}_{p,q,\rho}({\mathbb R}_+^n)}
\left(
\bigl\|w|\nabla f|\bigr\|_{L^p({\mathbb R}_+^n)}
+
\|w\cdot f\|_{L^p({\mathbb R}_+^n)}
\right).
\]
\end{itemize}
\end{corollary}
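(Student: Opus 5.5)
The plan is to treat the half space exactly as the paper treats ${\mathbb R}^n$ in Corollaries \ref{cor:Sobolev-hom} and \ref{cor:Sobolev-inhom}. The only change is that every cube used must lie in $\overline{{\mathbb R}_+^n}$, so that it belongs to ${\mathcal D}_+$ and its dyadic descendants are among the cubes entering $[w,v]_{{\mathcal A}^{r,\tilde r}_{p,q,\rho}({\mathbb R}_+^n)}$. Fix $f\in Lip_{\rm c}({\mathbb R}_+^n)$.

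\emph{Step 1: the local inequality.} For any cube $Q\subset\overline{{\mathbb R}_+^n}$, Theorems \ref{main:2weight-1} and \ref{main:2weight-2} apply on $Q$ since $f|_Q\in Lip(Q)$. Together with H\"older's inequality $|f_Q|\le |Q|^{-1}\|w f\|_{L^p(Q)}\|w^{-1}\|_{L^{p'}(Q)}$, this gives the estimate displayed at the start of Section \ref{s:application}:
\[
\|v f\|_{L^q(Q)}\lesssim [w,v]_{{\mathcal A}^{r,\tilde r}_{p,q,\rho}(Q)}\Bigl(\bigl\|w|\nabla f|\bigr\|_{L^p(Q)}+\ell_Q^{-1}\|wf\|_{L^p(Q)}\Bigr).
\]
For the second term I bound $\ell_Q|R|^{-1/p+1/q}(\fint w^{-p'r})^{1/(p'r)}(\fint v^{q\tilde r})^{1/(q\tilde r)}$ at $R=Q$ by the seminorm, using Jensen for the powers $r,\tilde r\ge1$. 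Since ${\mathcal D}(Q)\subset{\mathcal D}_+$ whenever $Q\in{\mathcal D}_+$, we get $[w,v]_{{\mathcal A}(Q)}\le[w,v]_{{\mathcal A}({\mathbb R}_+^n)}$. The same holds for the local class when $|Q|\le1$.

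\emph{Step 2, assertion (1).} Take $Q_j=([-2^j,0)^{n-1}\times[0,2^j))\cup[0,2^j)^n$. More precisely, I split ${\rm supp}\,f$ among the $2^{n-1}$ dyadic cubes of side $2^j$ in $\overline{{\mathbb R}_+^n}$ adjacent to the origin. Since $f$ has compact support, for $j$ large, ${\rm supp} f$ lies in the union of these cubes. I would rather use a single cube $[-2^{j},2^{j})^{n-1}\times[0,2^{j+1})\subset\overline{{\mathbb R}_+^n}$; this cube is not dyadic, but it is a union of $2^{n-1}$ dyadic cubes of the same size in ${\mathcal D}_+$. On each such dyadic cube $P$, with $\ell_P=2^j$, Step 1 gives
\[
\|vf\|_{L^q(P)}\lesssim [w,v]_{{\mathcal A}({\mathbb R}_+^n)}\Bigl(\|w|\nabla f|\|_{L^p({\mathbb R}_+^n)}+2^{-j}\|wf\|_{L^p({\mathbb R}_+^n)}\Bigr).
\]
Summing the finitely many pieces and letting $j\to\infty$ kills the zeroth-order term, since $\|wf\|_{L^p}<\infty$ ($w\in L^p_{\rm loc}$ is needed; I expect this to be implicit in the class). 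Monotone convergence then gives (1). I read the hypothesis in (1) as membership in the global class ${\mathcal A}^{r,\tilde r}_{p,q,\rho}({\mathbb R}_+^n)$, matching the constant on the right.

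\emph{Step 3, assertion (2).} Cover $\overline{{\mathbb R}_+^n}$ by the unit cubes $P_m=m+[0,1)^n$ with $m\in{\mathbb Z}^{n-1}\times({\mathbb N}\cup\{0\})$. These lie in ${\mathcal D}_+$ and have $|P_m|=1$, so Step 1 applies with the local seminorm and $\ell_{P_m}=1$. Then I sum the $q$-th powers over $m$.

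\emph{Main obstacle.} Recombining $\sum_m\bigl(a_m+b_m\bigr)^q$ with $a_m=\|w|\nabla f|\|_{L^p(P_m)}$ and $b_m=\|wf\|_{L^p(P_m)}$ into the $L^p$ norms requires $\ell^p\hookrightarrow\ell^q$, which is fine when $p\le q$. When $p>q$ it fails, and there I would instead apply the theorems with $\rho<\infty$ on all of the $\ell^\rho$-sum over small cubes simultaneously. That is, I would use the sparse/Hölder argument of Section \ref{s:proof} directly on the union of the $P_m$'s. The sum of the local seminorm pieces over all $P_m$ is then controlled by $[w,v]_{{\mathcal A}^{\rm loc}({\mathbb R}_+^n)}$, because the seminorm already sums over all $Q\in{\mathcal D}_+$ with $|Q|\le1$. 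Checking that the Hölder exponents $1/q=1/p+1/\rho$ close up across different $m$ is the step I expect to require the most care.
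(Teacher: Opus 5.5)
Your proposal is correct and follows the paper's argument: the local estimate with the $\ell_Q^{-1}\|wf\|_{L^p(Q)}$ term, exhaustion by the dyadic cubes at the origin for (1), and unit cubes for (2). The $p>q$ step you flag is just discrete H\"older, $\sum_m[w,v]_{{\mathcal A}(P_m)}^q(a_m+b_m)^q\le\bigl(\sum_m[w,v]_{{\mathcal A}(P_m)}^\rho\bigr)^{q/\rho}\bigl(\sum_m(a_m+b_m)^p\bigr)^{q/p}$, where $\sum_m[w,v]_{{\mathcal A}(P_m)}^\rho$ equals the $\rho$-th power of the local seminorm because the families ${\mathcal D}(P_m)$ partition $\{R\in{\mathcal D}_+:|R|\le1\}$; so no sparse argument on the union is needed, and one caveat applies: $w\in L^p_{\rm loc}$ is not implicit in the class, since enlarging $w$ only lowers the seminorm (the paper passes over this too), but in (1) you can avoid it by bounding $\|v\|_{L^q(P)}|f_P|$ with $|f_P|\le|P|^{-1}\|f\|_{L^1}$ and the class condition at $R=P$, bounding $\int_Pw^{-p'r}$ below by its integral over a fixed subcube, which gives decay $O(\ell_P^{-1})$.
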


\begin{corollary}
Let $1\le p_0<\infty$ and $0<q,q_1<\infty$ satisfy
\[
\frac1q=\frac{1-\theta}{p_0}+\frac\theta{q_1}.
\]
Assume that the parameters $p_1,q_1,r,\rho$ satisfy the same assumption in Theorems {\rm \ref{main:2weight-1}} and {\rm \ref{main:2weight-2}}.
Then, for all $f\in Lip_{\rm c}({\mathbb R}_+^n)$, the following assertions hold.
\begin{itemize}
\item[{\rm (1)}] If $(w,v)\in{\mathcal A}^{r,\tilde{r}}_{p,q,\rho}({\mathbb R}_+^n)$ and $v^{p_0}\in A_{p_0}({\mathbb R}_+^n)$, then,
\[
\|v\cdot f\|_{L^q({\mathbb R}_+^n)}
\lesssim
(1+[v^{p_0}]_{A_{p_0}({\mathbb R}_+^n)})^{1-\theta}
[w,v]_{{\mathcal A}_{p_1,q_1,\rho}^{r,\tilde{r}}({\mathbb R}_+^n)}^\theta
\|v\cdot f\|_{L^{p_0}}^{1-\theta}
\bigl\|w|\nabla f|\bigr\|_{L^{p_1}({\mathbb R}_+^n)}^\theta.
\]

\item[{\rm (2)}] If $(w,v)\in{\mathcal A}^{r,\tilde{r},{\rm loc}}_{p,q,\rho}({\mathbb R}_+^n)$ and $v^{p_0}\in A^{\rm loc}_{p_0}({\mathbb R}_+^n)$, then, for all $f\in Lip_{\rm c}({\mathbb R}_+^n)$,
\begin{align*}
\|v\cdot f\|_{L^q({\mathbb R}^n)}
\lesssim
(1+[v^{p_0}]_{A^{\rm loc}_{p_0}({\mathbb R}_+^n)})^{1-\theta}
&[w,v]_{{\mathcal A}_{p_1,q_1,\rho}^{r,\tilde{r},{\rm loc}}({\mathbb R}_+^n)}^\theta
\|v\cdot f\|_{L^{p_0}({\mathbb R}_+^n)}^{1-\theta}\\
&\times
\left(
\bigl\|w|\nabla f|\bigr\|_{L^{p_1}({\mathbb R}_+^n)}
+
\|w\cdot f\|_{L^{p_1}({\mathbb R}_+^n)}
\right)^\theta.
\end{align*}
\end{itemize}
\end{corollary}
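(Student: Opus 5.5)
The plan is to reduce the upper half-space statement to the cube estimates of Theorems \ref{main:2weight-1} and \ref{main:2weight-2}, exactly as was done for Corollaries \ref{cor:wGN} (1) and (2) on ${\mathbb R}^n$, but exhausting $\overline{{\mathbb R}_+^n}$ by dyadic cubes contained in it.

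\emph{Step 1 (local estimate on a half-space cube).} Fix $f\in Lip_{\rm c}({\mathbb R}_+^n)$ and a cube $Q\in{\mathcal D}_+$. Every $R\in{\mathcal D}(Q)$ lies in ${\mathcal D}_+$, so $[w,v]_{{\mathcal A}^{r,\tilde r}_{p_1,q_1,\rho}(Q)}\le[w,v]_{{\mathcal A}^{r,\tilde r}_{p_1,q_1,\rho}({\mathbb R}_+^n)}$; in the local version, if $|Q|\le1$, the same bound holds with the ${\rm loc}$ seminorm. The interpolation chain displayed before Corollary \ref{cor:wGN} then applies verbatim on $Q$:
\[
\|v(f-f_Q)\|_{L^q(Q)}\lesssim\bigl[(1+[v^{p_0}]_{A_{p_0}(Q)})\|v f\|_{L^{p_0}(Q)}\bigr]^{1-\theta}\bigl([w,v]_{{\mathcal A}^{r,\tilde r}_{p_1,q_1,\rho}(Q)}\|w|\nabla f|\|_{L^{p_1}(Q)}\bigr)^\theta .
\]
Here $[v^{p_0}]_{A_{p_0}(Q)}$ is dominated by $[v^{p_0}]_{A_{p_0}({\mathbb R}_+^n)}$ because $Q\subset\overline{{\mathbb R}_+^n}$. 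The $A_{p_0}$ bound is used, via H\"older, to get $\|v\|_{L^{p_0}(Q)}|f_Q|\le[v^{p_0}]_{A_{p_0}(Q)}^{1/p_0}\|vf\|_{L^{p_0}(Q)}$.

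\emph{Step 2, part (1).} Use the cubes $Q_j=([-2^j,0)^{n-1}\times[0,2^j))\cup[0,2^j)^n$ from the display preceding the corollaries. More precisely, use the $2^{n-1}$ dyadic cubes of side $2^j$ in ${\mathcal D}_+$ with a corner at the origin, whose union increases to $\overline{{\mathbb R}_+^n}$. For each such $Q$:
\begin{itemize}
\item since $\operatorname{supp}f$ is compact, $|f_Q|\le|Q|^{-1}\|f\|_{L^1}\to0$;
\item $\|v\|_{L^q(Q)}|f_Q|$ is controlled through the $A_{p_0}$ term in Step 1, or simply discarded, because $\|vf_Q\|_{L^q(Q)}\lesssim|f_Q|\,\|v\|_{L^q(Q)}$ and the ${\mathcal A}$-seminorm gives $\|v\|_{L^{q_1}(Q)}\lesssim[w,v]\,\ell_Q^{-1}|Q|^{1/p_1}(\fint_Qw^{-p_1'})^{-1/p_1'}$.
\end{itemize}
Letting $j\to\infty$ and using Fatou on the left gives (1), summing over the finitely many orthants' cubes.

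\emph{Step 3, part (2).} Tile $\overline{{\mathbb R}_+^n}$ by unit cubes $m+[0,1)^n$ with $m\in{\mathbb Z}^{n-1}\times({\mathbb N}\cup\{0\})$, all of which lie in ${\mathcal D}_+$ with $|Q|=1$, so only the ${\rm loc}$ seminorms enter. On each cube, keep the mean term as in the display before Corollary \ref{cor:Sobolev-hom}, obtaining
\[
\|vf\|_{L^q(Q)}\lesssim\|vf\|_{L^{p_0}(Q)}^{1-\theta}\bigl(\|w|\nabla f|\|_{L^{p_1}(Q)}+\|wf\|_{L^{p_1}(Q)}\bigr)^\theta,
\]
with the constants above. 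Raise this to the power $q$ and sum over $m$. Since $\frac{(1-\theta)q}{p_0}+\frac{\theta q}{p_1'}$ — more precisely $\frac{(1-\theta)q}{p_0}+\frac{\theta q}{q_1}=1$ and $p_1\le q_1$ (or use the $\rho$-summability when $p_1>q_1$) — discrete H\"older followed by the embedding $\ell^{p_1}\hookrightarrow\ell^{q_1}$ yields the global amalgam bound.

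The main obstacle is this final summation when $p_1>q_1$ (Theorem \ref{main:2weight-2} regime). There the cubewise seminorms must themselves be summed in $\ell^\rho$ rather than bounded uniformly. I would first try to keep the seminorm inside the sum, $\sum_m[w,v]_{Q_m}^{\rho}\le[w,v]^\rho_{{\rm loc}}$, and apply three-factor H\"older with $\frac1{q_1}=\frac1{p_1}+\frac1\rho$.
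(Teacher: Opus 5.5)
Your proposal is correct and is essentially the paper's argument. It uses the cubewise interpolation chain, exhaustion of $\overline{{\mathbb R}_+^n}$ by dyadic cubes for (1), and the unit-cube amalgam decomposition for (2). Your $2^{n-1}$ corner cubes are in fact needed, since the two sets in the paper's display exhaust the half-space only when $n=2$. In (2), your three-factor H\"older inequality with $\sum_m[w,v]_{{\mathcal A}(Q_m)}^\rho\le[w,v]_{\rm loc}^\rho$ correctly handles the case $p_1>q_1$, which the paper leaves implicit.

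The only loose point is Step 2: the mean term needs your two bounds jointly, not as alternatives, via $\|v\|_{L^q(Q)}|f_Q|\le(\|v\|_{L^{p_0}(Q)}|f_Q|)^{1-\theta}(\|v\|_{L^{q_1}(Q)}|f_Q|)^{\theta}$. The first factor is bounded via $A_{p_0}$, and the second is at most $[w,v]\,\|f\|_{L^1}\,\ell_Q^{-1}\|w^{-1}\|_{L^{p_1'}(Q)}^{-1}\to0$.
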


\subsection{Improvement of unweighted Sobolev embedding theorem}

According to the case $q=1$ of Theorem \ref{main:1weight-2} (2), the Fefferman--Phong-type weighted inequality is given as
\[
\|g\cdot f\|_{L^1}
\lesssim
\|g\|_{{\mathcal M}^{(p^\ast)'}_{1,p'}}
\|\nabla f\|_{L^p},
\quad
\|g\cdot f\|_{L^1}
\lesssim
\|g\|_{m^{(p^\ast)'}_{1,p'}}
\left(
\|\nabla f\|_{L^p}
+
\|f\|_{L^p}
\right),
\]
where the spaces ${\mathcal M}^p_{q,r}({\mathbb R}^n)$ and $m^p_{q,r}({\mathbb R}^n)$ are the original Bourgain--Morrey spaces with the finite norm
\[
\|g\|_{{\mathcal M}^p_{q,r}}
\equiv
\sup_{Q\in{\mathcal D}}
\|g\|_{{\mathcal M}^p_{q,r}(Q)}
=
\left(
\sum_{Q\in{\mathcal D}}
\left[
|Q|^{\frac1p-\frac1q}
\|g\|_{L^q(Q)}
\right]^r
\right)^{\frac1r},
\]
\[
\|g\|_{m^p_{q,r}}
\equiv
\left(
\sum_{m\in{\mathbb Z}^n}
\|g\|_{{\mathcal M}^p_{q,r}(m+[0,1)^n)}^r
\right)^{\frac1r}
=
\left(
\sum_{Q\in{\mathcal D}, \; |Q|\le1}
\left[
|Q|^{\frac1p-\frac1q}
\|g\|_{L^q(Q)}
\right]^r
\right)^{\frac1r},
\]
respectively.
Therefore, using the dual spaces $\dot{W}^{-1,p'}({\mathbb R}^n)$ and $W^{-1,p'}({\mathbb R}^n)$ of the Sobolev spaces
\[
\dot{W}^{1,p}({\mathbb R}^n)
=
\{
f\in L_{\rm loc}^1({\mathbb R}^n)
\,:\,
\|\nabla f\|_{L^p}<\infty
\},
\]
\[
W^{1,p}({\mathbb R}^n)
=
\{
f\in L_{\rm loc}^1({\mathbb R}^n)
\,:\,
\|f\|_{L^p}+\|\nabla f\|_{L^p}<\infty
\},
\]
respectively, we have the following Sobolev embedding theorems.

\begin{corollary}\label{cor:improve-Sobolev}
Let $1<p<n$.
Then the embeddings
\[
{\mathcal M}^p_{1,p^\ast}({\mathbb R}^n)
\hookrightarrow
\dot{W}^{-1,p^\ast}({\mathbb R}^n),
\quad
m^p_{1,p^\ast}({\mathbb R}^n)
\hookrightarrow
W^{-1,p^\ast}({\mathbb R}^n).
\]
\end{corollary}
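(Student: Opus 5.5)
We deduce the corollary from the case $q=1$ of Theorem \ref{main:1weight-2} (2), applied with the integrability exponent $P\equiv(p^\ast)'$ in place of $p$, followed by a duality argument. The parameters work out as follows.

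\emph{Range of $P$.} Since $1<p<n$ we have $n/(n-1)<p^\ast<\infty$, hence $1<P<n$. In particular $P>q=1$.

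\emph{Choice of $\rho$.} The relation $1=1/P+1/\rho$ gives $\rho=P'=p^\ast$.

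\emph{Choice of $s$.} The relation $1=1/P+1/s-1/n$ gives $1/s=1/p^\ast+1/n=1/p$, so $s=p$. The requirement $s<\rho$ is $p<p^\ast$, which holds.

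Theorem \ref{main:1weight-2} (2) therefore yields, for every cube $Q$, every $f\in Lip(Q)$ and every $g\in{\mathcal M}^p_{1,p^\ast}(Q)$,
\[
\|g(f-f_Q)\|_{L^1(Q)}\lesssim\|g\|_{{\mathcal M}^p_{1,p^\ast}(Q)}\|\nabla f\|_{L^P(Q)} .
\]

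\textbf{Homogeneous embedding.} Fix $f\in Lip_{\rm c}({\mathbb R}^n)$ and $g\in{\mathcal M}^p_{1,p^\ast}({\mathbb R}^n)$, and argue as in Corollary \ref{cor:Sobolev-hom}.

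\emph{Covering by dyadic cubes.} Cover the support of $f$ by the $2^n$ dyadic cubes of side $2^j$ having a vertex at the origin (one per orthant), with $j$ large. Writing $f=(f-f_Q)+f_Q$ on each such cube $Q$, the oscillation part is controlled by the local inequality above.

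\emph{The average term.} Its contribution is $\|g\|_{L^1(Q)}|f_Q|$. Taking only the term $R=Q$ in the Bourgain--Morrey sum gives $\|g\|_{L^1(Q)}\le|Q|^{1-1/p}\|g\|_{{\mathcal M}^p_{1,p^\ast}}$. Also $|f_Q|\le|Q|^{-1}\|f\|_{L^1}$. So this contribution is at most $|Q|^{-1/p}\|g\|_{{\mathcal M}^p_{1,p^\ast}}\|f\|_{L^1}$, which tends to $0$ as $j\to\infty$.

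\emph{Conclusion.} Letting $j\to\infty$ we get
\[
\Bigl|\int_{{\mathbb R}^n}gf\Bigr|\le\|gf\|_{L^1}\lesssim\|g\|_{{\mathcal M}^p_{1,p^\ast}}\|\nabla f\|_{L^P}.
\]
Thus $f\mapsto\int gf$ is a bounded linear functional on $Lip_{\rm c}$ with respect to the $\dot W^{1,P}$ seminorm. It extends by density to $\dot W^{1,P}({\mathbb R}^n)$, whose dual is $\dot W^{-1,P'}=\dot W^{-1,p^\ast}$. This gives the first embedding, and the map $g\mapsto$ (functional) is plainly linear and bounded.

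\textbf{Inhomogeneous embedding.} Use the unit cubes $Q_m=m+[0,1)^n$, $m\in{\mathbb Z}^n$.

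\emph{Local estimate.} On each $Q_m$, the same splitting together with $|f_{Q_m}|\le\|f\|_{L^P(Q_m)}$ and $\|g\|_{L^1(Q_m)}\le\|g\|_{{\mathcal M}^p_{1,p^\ast}(Q_m)}$ gives
\[
\|gf\|_{L^1(Q_m)}\lesssim\|g\|_{{\mathcal M}^p_{1,p^\ast}(Q_m)}\bigl(\|\nabla f\|_{L^P(Q_m)}+\|f\|_{L^P(Q_m)}\bigr).
\]

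\emph{Summation.} Sum over $m$ and apply H\"older's inequality for sequences with exponents $p^\ast=P'$ and $P$. Since the dyadic cubes of volume at most $1$ are exactly the dyadic subcubes of the $Q_m$, the $\ell^{p^\ast}$-sum of $\|g\|_{{\mathcal M}^p_{1,p^\ast}(Q_m)}$ is precisely $\|g\|_{m^p_{1,p^\ast}}$. The other factor is $\bigl(\sum_m(\|\nabla f\|_{L^P(Q_m)}+\|f\|_{L^P(Q_m)})^P\bigr)^{1/P}\lesssim\|f\|_{W^{1,P}}$.

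\emph{Conclusion.} The resulting bound $\|gf\|_{L^1}\lesssim\|g\|_{m^p_{1,p^\ast}}\|f\|_{W^{1,P}}$ and duality give $m^p_{1,p^\ast}\hookrightarrow W^{-1,p^\ast}$.

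The main obstacle is the limiting step in the homogeneous case. Dyadic cubes do not exhaust ${\mathbb R}^n$ from a single fixed cube, so one must use the $2^n$ orthant cubes and check that the average terms vanish. The bound $|Q|^{-1/p}\to0$ above handles this, but it should be verified carefully that restricting to dyadic cubes in the definition of the norms causes no loss.
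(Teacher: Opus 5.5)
Your proposal is correct and follows the paper's route: the case $q=1$ of Theorem \ref{main:1weight-2} (2) with Sobolev exponent $(p^\ast)'$ (so that $s=p$ and $\rho=p^\ast$), globalized as in Corollaries \ref{cor:Sobolev-hom} and \ref{cor:Sobolev-inhom}, and then duality. Using the $2^n$ dyadic orthant cubes in the homogeneous limit is a slightly more careful version of the paper's ``$\ell_Q\to\infty$ keeping the center'' step: it guarantees ${\mathcal D}(Q)\subset{\mathcal D}({\mathbb R}^n)$, hence $\|g\|_{{\mathcal M}^p_{1,p^\ast}(Q)}\le\|g\|_{{\mathcal M}^p_{1,p^\ast}}$.
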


Here, according to the embedding
\[
L^{p,p^\ast}({\mathbb R}^n)
\hookrightarrow
{\mathcal M}^p_{1,p^\ast}({\mathbb R}^n)
\hookrightarrow
m^p_{1,p^\ast}({\mathbb R}^n)
\]
obtained in \cite[Theorem 3.8]{HNSH23}, this corollary is the improvement of the already known Lorentz--Sobolev embedding theorem
\[
L^{p,p^\ast}({\mathbb R}^n)
\hookrightarrow
\dot{W}^{-1,p^\ast}({\mathbb R}^n).
\]

{\bf Acknowledgements.} 
The author was supported by the Grant-in-Aid for JSPS Fellows (No. 25KJ0222).

\end{document}